\newcommand{\Hom}{\operatorname{Hom}}
\renewcommand{\det}{\operatorname{det}}
\newcommand{\id}{\operatorname{id}}
\newcommand{\Hol}{\operatorname{Hol}}	
\newcommand{\N}{\mathbb{N}}
\newcommand{\Z}{\mathbb{Z}}
\newcommand{\R}{\mathbb{R}}
\newcommand{\A}{\mathscr{A}}
\newcommand{\su}{\mathfrak{su}}
\newcommand{\G}{\mathscr{G}}
\newcommand{\tr}{\text{tr}}
\newcommand{\newreptheorem}[2]{%
\newtheorem*{rep@#1}{\rep@title}
\newenvironment{rep#1}[1]{%
\def\rep@title{#2 \ref*{##1}}%
\begin{rep@#1}}%
{\end{rep@#1}}}
\newtheorem{theorem}{Theorem}[section]
\newtheorem{lemma}[theorem]{Lemma}
\newtheorem{proposition}[theorem]{Proposition}
\newtheorem{corollary}[theorem]{Corollary}
\newtheorem{conjecture}{Conjecture}
\theoremstyle{definition}
\newtheorem{definition}[theorem]{Definition}
\theoremstyle{remark}
\newtheorem{remark}[theorem]{Remark}
\newcommand{\smallcirc}{\scaleobj{0.6}{\circ}}
\author[Tye Lidman]{Tye Lidman}
\address{Department of Mathematics, North Carolina State University, Raleigh, NC 27607}
\email{tlid@math.ncsu.edu}
\author[Juanita Pinz\'on-Caicedo]{Juanita Pinz\'on-Caicedo}
\address {University of Notre Dame, Department of Mathematics, Notre Dame, IN 46556, USA.}
\email{jpinzonc@nd.edu}
\author[Raphael Zentner]{Raphael Zentner}
\address{Fakult\"at f\"ur Mathematik, Universit\"at Regensburg, 93040 Regensburg, Germany}
\email{raphael.zentner@mathematik.uni-regensburg.de}
\title[Toroidal homology spheres and $SU(2)$-representations]{Toroidal integer homology three-spheres have irreducible $SU(2)$-representations}
\date{}
\begin{document}

\begin{abstract}
We prove that if an integer homology three-sphere contains an embedded incompressible torus, then its fundamental group admits irreducible $SU(2)$-representations.  Our methods use instanton Floer homology, and in particular the surgery
exact triangle, holonomy perturbations, and a non-vanishing result due
to Kronheimer-Mrowka, as well as results about surgeries on cables due
to Gordon.
\end{abstract}
\maketitle

\section{Introduction}


The fundamental group is one of the most powerful invariants to distinguish closed three-manifolds. In fact, by Perelman's proof of Thurston's Geometrization conjecture \cite{Perelman1, Perelman2, Perelman3}, fundamental groups determine closed, orientable three-manifolds up to orientations of the prime factors and up to the indeterminacy arising from lens spaces. Prominently, the three-dimensional Poincar\'e conjecture, a special case of Geometrization, characterizes $S^3$ as the unique closed, simply-connected three-manifold.  
For a three-manifold with non-trivial fundamental group, it is then useful to quantify the non-triviality of the fundamental group. Since the Geometrization theorem implies that three-manifolds have residually finite fundamental groups \cite{Hempel}, this non-triviality can be measured by representations to finite groups. However, there is not a finite group $G$ such that every three-manifold group has a non-trivial homomorphism to $G$. Therefore, a more uniform measurement of non-triviality can be found in the following conjecture:

\begin{conjecture}[Kirby Problem 3.105(A), \cite{Kirby}]\label{conj:su2}
If $Y$ is a closed, connected, three-manifold other than $S^3$, then $\pi_1(Y)$ admits a non-trivial $SU(2)$-representation.  
\end{conjecture}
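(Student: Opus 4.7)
The plan is to reduce to integer homology three-spheres and then proceed by geometric type via Perelman's Geometrization theorem. First, if $H_1(Y;\Z) \neq 0$, then $\pi_1(Y)$ surjects onto a non-trivial finite cyclic group $\Z/n\Z$ for some $n>1$, which embeds into $SU(2)$ as the $n$-th roots of unity inside a maximal torus; pulling back produces a non-trivial (abelian) representation. Thus I may assume $Y$ is an integer homology three-sphere, and I consider the JSJ decomposition of $Y$.

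If the JSJ decomposition is non-trivial, then $Y$ contains an embedded incompressible torus, and the main theorem of this paper supplies an irreducible $SU(2)$-representation. So I am reduced to the geometric (atoroidal) integer homology spheres. Intersecting the list of Thurston geometries with the condition $H_1 = 0$ leaves precisely the Seifert-fibered Brieskorn homology spheres $\Sigma(a_1,\dots,a_k)$ with pairwise coprime $a_i$ (including the spherical Poincar\'e sphere $\Sigma(2,3,5)$) and the hyperbolic integer homology spheres. For the Seifert-fibered cases, $\pi_1(Y)$ is a central extension of a polygonal group, and irreducible $SU(2)$-representations can be constructed directly by sending each standard generator to a rotation of appropriate angle in a carefully chosen $SU(2)$-plane and verifying the defining relations, as carried out classically by Fintushel--Stern.

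The remaining case is that of hyperbolic integer homology three-spheres, and this is the main obstacle. The hyperbolic holonomy provides an irreducible representation $\rho_{\mathrm{hyp}}:\pi_1(Y) \to PSL(2,\C)$, which lifts to $SL(2,\C)$ since $H^2(Y;\Z/2) = 0$; unfortunately this representation lies far from $SU(2)$, and there is no general deformation argument forcing a non-trivial $SU(2)$-representation on the same component of the character variety. The strategy would be to adapt the gauge-theoretic engine of this paper: take a Dehn surgery presentation of $Y$ along some link in a simpler manifold, set up the surgery exact triangle in instanton Floer homology along that link, apply holonomy perturbations to neutralize reducible flat connections, and invoke Kronheimer--Mrowka's non-vanishing theorem to force the instanton Floer homology of $Y$ to be non-trivial, hence producing an irreducible flat $SU(2)$-connection. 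The key difficulty is the absence of a preferred incompressible surface to cut along, so one must find either a canonical Dehn filling description that makes the surgery triangle effective, or some geometric replacement (for instance, a least-area surface in a suitable finite cover, or a quantitative use of the thick-thin decomposition of the hyperbolic structure) that plays the role the JSJ torus plays in the toroidal case. This is precisely the point at which all current approaches to Conjecture \ref{conj:su2} stall, and a genuinely new idea appears to be needed to close it.
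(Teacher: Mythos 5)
The statement you were asked to prove is an open conjecture (Kirby Problem 3.105(A)); the paper does not prove it and does not claim to. It only establishes the toroidal case (Theorem~\ref{thm:toroidal}), and explicitly notes that the authors ``are unable to complete the remaining step'' of the program that would reduce the full conjecture to the hyperbolic case. Your reduction is essentially the correct and known one: non-trivial $H_1$ gives an abelian representation; a non-trivial JSJ decomposition gives an incompressible torus, handled by the paper's main theorem; Seifert-fibered integer homology spheres are handled by Fintushel--Stern. (You should also dispose of the non-prime case by surjecting onto a prime summand, but that is routine.) This matches the trichotomy the paper itself describes in the introduction.

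The genuine gap is the hyperbolic case, and you correctly identify it yourself: the holonomy lift to $SL(2,\C)$ gives an irreducible $SL_2(\C)$-representation (this is how \cite{Zentner} proves the $SL_2(\C)$ analogue), but there is no known mechanism to convert or deform this into an $SU(2)$-representation, and no known surgery/Floer-theoretic argument that forces $I_*(Y)\neq 0$ for an arbitrary hyperbolic integer homology sphere. Your closing paragraph is a research program, not a proof step: the surgery exact triangle and holonomy perturbations used in this paper all lean on a distinguished torus (the JSJ torus, or the boundary of a knot exterior with controlled $0$-surgery), and nothing in the hyperbolic setting currently plays that role. So the proposal cannot be accepted as a proof of the conjecture; at best it is a correct reduction of the conjecture to its hyperbolic case, which is where the problem currently stands in the literature.
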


Note that this conjecture is equivalent to the statement that the fundamental groups of all integer homology three-spheres other than $S^3$ admit irreducible $SU(2)$-representations. Indeed, every three-manifold whose first homology group is non-zero admits non-trivial abelian representations to $SU(2)$. Moreover, lens spaces are examples of manifolds that admit non-trivial $SU(2)$-representations of their fundamental groups, but no irreducible ones. There are also three-manifolds with non-abelian fundamental group which do not admit irreducible representations \cite{Motegi}. However, for representations of perfect groups to $SU(2)$, non-triviality is equivalent to irreducibility.  

For comparison, the third author showed in \cite{Zentner} that \Cref{conj:su2} is true if one replaces $SU(2)$ with $SL_2(\mathbb{C})$. The reader may also relate \Cref{conj:su2} with characterizing the three-manifolds with simplest instanton or Heegaard Floer homologies.  One side of the L-space conjecture predicts that every prime integer homology three-sphere other than $S^3$ and the Poincar\'e homology three-sphere admits a co-orientable taut foliation.  This fact, together with the gauge-theoretic methods used by Kronheimer-Mrowka in \cite{KM_P}, would then imply \Cref{conj:su2}.  

There are many families of integer homology three-spheres for which \Cref{conj:su2} has been established, such as those which are Seifert fibered (although the methods go back to Fintushel-Stern \cite{FintushelStern}, this can be found explicitly in \cite[Theorem 2.1]{Menagerie}), branched double covers of non-trivial knots with determinant 1 \cite[Theorem 3.1]{CornwellNgSivek} and \cite[Corollary 9.2]{Zentner_Simple}, $1/n$-surgeries on non-trivial knots in $S^3$ \cite{KM_Dehn}, those that are filled by a Stein manifold which is not a homology ball \cite{BaldwinSivek}, or for splicings of knots in $S^3$ \cite{Zentner}.  


It follows again from Geometrization that there are three (non-disjoint) types of prime integer homology three-spheres: Seifert fibered, hyperbolic, and toroidal ones. We remark that although some toroidal integer homology three-spheres are Seifert fibered, they are never hyperbolic.  The third author established that if all hyperbolic integer homology three-spheres have irreducible $SU(2)$-representations, then \Cref{conj:su2} holds in general.  While we are unable to complete the remaining step in this program, we confirm the existence of $SU(2)$-representations for toroidal integer homology three-spheres.  

 
\begin{theorem}\label{thm:toroidal}
Let $Y$ be a toroidal integer homology three-sphere.  Then $\pi_1(Y)$ admits an irreducible $SU(2)$-representation.  
\end{theorem}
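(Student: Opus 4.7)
The plan is to exploit the incompressible torus $T \subset Y$ to split $Y$ into two knot exteriors, analyze the gluing slope, and reduce to cases handled either by the third author's splicing theorem or by Kronheimer--Mrowka's non-vanishing of instanton Floer homology propagated through the surgery exact triangle. Since $Y$ is an integer homology three-sphere, every embedded torus in $Y$ is separating, so we may write $Y = M_1 \cup_T M_2$. A Mayer--Vietoris computation shows $H_1(M_i) \cong \Z$, so each $M_i$ is the exterior of a nontrivial knot $K_i$ in an integer homology three-sphere $Y_i$ (recovered by Dehn filling along the meridian $\mu_i$ of $K_i$, which is the class killing $H_1(M_i)$). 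Both knots $K_i$ are nontrivial, for otherwise $T$ would compress in $Y$. Let $\lambda_i$ denote the Seifert longitude of $K_i$ on $T$, and describe the gluing by the image $\mu_2 \mapsto p\mu_1 + q\lambda_1$ with $\gcd(p,q)=1$.

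The first step is a dichotomy based on the gluing slope. If $p = 0$, then $\mu_2$ is sent to $\pm\lambda_1$ and, by the determinant condition on the gluing, $\mu_1$ is sent to $\pm\lambda_2$: the torus $T$ realizes $Y$ as the \emph{splice} of $(Y_1, K_1)$ and $(Y_2, K_2)$. Here I would invoke the third author's splicing theorem, combined with an induction on the number of JSJ pieces in case the $Y_i$ are themselves nontrivial. The more subtle case is $p \neq 0$: by a classical theorem of Gordon on Dehn surgery on cable knots, one may reinterpret $Y$ as a Dehn surgery on the $(p,q)$-cable $K_1^{(p,q)} \subset Y_1$, with surgery slope determined by the gluing matrix. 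Since $K_1$ is nontrivial, so is its $(p,q)$-cable.

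For the non-splice case I would then apply the surgery exact triangle in instanton Floer homology to a suitable triple of Dehn fillings on the knot complement $M_1 \subset Y$, together with holonomy perturbations supported on a tubular neighborhood of the cabling torus. Kronheimer--Mrowka's non-vanishing theorem for the framed instanton homology of nontrivial knot surgeries, carried through the triangle, would force a non-reducible critical point of the perturbed Chern--Simons functional on $Y$. A standard compactness and continuation argument then promotes this to an irreducible honest flat $SU(2)$-connection, and therefore to an irreducible representation $\pi_1(Y) \to SU(2)$.

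The principal difficulty, as I see it, lies in the non-splice case. One must ensure that after the cable reinterpretation, Kronheimer--Mrowka's non-vanishing actually survives to the relevant corner of the surgery exact triangle without being absorbed into the reducible locus of the character variety, and that the holonomy perturbations can be chosen small enough that their critical sets correspond to genuine representations of $\pi_1(Y)$ rather than merely to irreducible perturbed connections. A related subtlety is that the filled manifolds $Y_i$ may themselves be nontrivial toroidal $\Z$HS for which \Cref{conj:su2} is not a priori known, so the argument has to proceed directly via Floer-theoretic non-vanishing together with the cable-space geometry, rather than by recursing on the conjecture itself.
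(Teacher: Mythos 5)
Your proposed dichotomy between a ``splice case'' ($p=0$) and a ``non-splice case'' ($p \neq 0$) is illusory, and this is the first gap. Once you pass to $Y = M_1 \cup_T M_2$, the only canonical slope on $\partial M_i$ is the Seifert longitude $\lambda_i$ (the unique primitive class that dies in $H_1(M_i) \cong \Z$); the ``meridian'' $\mu_i$ is \emph{not} determined, since any slope meeting $\lambda_i$ once yields an integer homology sphere filling. The Mayer--Vietoris sequence for the separating torus shows $H_1(T) \xrightarrow{\ \sim\ } H_1(M_1) \oplus H_1(M_2)$, from which $\lambda_1 \cdot \lambda_2 = \pm 1$: so one may always \emph{choose} $\mu_1 = \lambda_2$ and $\mu_2 = \lambda_1$, and the gluing is then a splice by definition. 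Your $p \neq 0$ case therefore never has to occur. Moreover, even granting some fixed choice of $\mu_i$, the complement of $T$ in $Y$ consists of two knot exteriors, not a knot exterior and a solid torus, so $Y$ is not a Dehn surgery on a knot in $Y_1$ at all. Gordon's cable theorem cannot ``reinterpret $Y$ as Dehn surgery on a $(p,q)$-cable'' — there is no surgery happening.

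The second gap is in the splice case itself, which is now the whole argument. You propose to ``invoke the third author's splicing theorem,'' but \cite[Theorem 8.3]{Zentner} treats only splices of knots in $S^3$, while here the $Y_i$ are arbitrary integer homology spheres; generalizing that theorem is precisely the content of this paper, not something you can take off the shelf. The missing piece is a non-vanishing theorem for the $0$-surgery: if $Y_i$ is $SU(2)$-cyclic and $K_i$ has irreducible, boundary-incompressible exterior, then $I^w_*((Y_i)_0(K_i)) \neq 0$ (\Cref{instanton-0-surgery}). This is where Gordon's cable result genuinely enters — to show $Y_{1/4}(K) \cong Y_1(K_{2,1})$ and hence that $Y_0(K_{2,1})$ is irreducible with $b_1 = 1$, to which Kronheimer--Mrowka's sutured non-vanishing theorem applies — together with the surgery exact triangle, all on the level of $Y_i$, not on $Y$ itself. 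The non-vanishing then feeds into the pillowcase alternative (\Cref{pillowcase alternative}) and \Cref{lem:avoids-lines}, producing essential curves in $i^*(R(K_i))$ whose forced intersection gives the irreducible representation of $\pi_1(Y)$. Your instinct that one cannot simply recurse on the conjecture for the $Y_i$, and that instanton non-vanishing plus cable geometry must be used directly, is correct — but the cable geometry lives inside \Cref{instanton-0-surgery}, not in a reinterpretation of the gluing of $T$.
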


A proof of Theorem~\ref{thm:toroidal} could be obtained by showing that toroidal integer homology three-spheres have non-trivial instanton Floer homology. Although we expect the latter to be true (see \cite[Problem 3.106]{Kirby}), we do not prove it in this article. Our proof of \Cref{thm:toroidal} instead relies on holonomy perturbations in a manner similar to the proof of \cite[Theorem 8.3]{Zentner}. If $Y$ is a toroidal integer homology three-sphere, then $Y$ can be viewed as a splice of knots $K_i$ in integer homology three-spheres $Y_i$ for $i = 1,2$ (see for example \cite[Proof of Corollary 6.2]{Eftekhary}). If some $Y_i$ has an irreducible $SU(2)$-representation, then there is a $\pi_1$-surjective map from $Y$ to $Y_i$ and we can pull back to an irreducible $SU(2)$-representation for $Y$.  If not, then we will study the image of the space of representations of the knot exterior $Y_i\setminus N(K_i)^\circ$ in the character variety for the boundary torus (i.e. in the pillowcase). Here, $N(K_i)$ denotes a closed tubular neighborhood of $K_i$, and $N(K_i)^\circ$ denotes its interior.  Similar to the case of non-trivial knots in $S^3$, if $Y_i$ has no irreducible representations, we will show that the image in the pillowcase contains a suitably essential loop. The loops for the two exteriors will have a non-trivial intersection, and therefore the spliced manifold $Y$ will admit an irreducible $SU(2)$-representation.  

Theorem~\ref{thm:toroidal} gives a simpler proof of \cite[Theorem 9.4]{Zentner} since it avoids the use of a finiteness result of Boileau-Rubinstein-Wang. 

\begin{corollary}[Theorem 9.4, \cite{Zentner}]
Every integer homology three-sphere other than $S^3$ has an irreducible $SL_2(\mathbb{C})$-representation of its fundamental group. 
\end{corollary}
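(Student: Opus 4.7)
The plan is to combine the prime decomposition with Geometrization to reduce to three cases, each of which I would feed into an existing or just-proved result. First I would observe that if $Y$ is an integer homology three-sphere with prime decomposition $Y = Y_1 \# \cdots \# Y_n$ and $Y \ne S^3$, then at least one factor $Y_i$ is not $S^3$, and the surjection $\pi_1(Y) \twoheadrightarrow \pi_1(Y_i)$ (collapsing the other free factors in the free product $\pi_1(Y_1) \ast \cdots \ast \pi_1(Y_n)$) pulls back any irreducible $SL_2(\mathbb{C})$-representation of $\pi_1(Y_i)$ to an irreducible one of $\pi_1(Y)$. So I may assume $Y$ is prime and $Y \ne S^3$.

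Next I would invoke Geometrization to split into three (overlapping but exhaustive) subcases: $Y$ is Seifert fibered, hyperbolic, or toroidal. In the Seifert fibered case, Fintushel-Stern (see \cite[Theorem 2.1]{Menagerie}) furnishes an irreducible $SU(2)$-representation; such a representation remains irreducible when composed with the inclusion $SU(2) \hookrightarrow SL_2(\mathbb{C})$, since irreducibility is just the statement that the resulting action on $\mathbb{C}^2$ has no invariant complex line. In the toroidal case, the new \Cref{thm:toroidal} supplies an irreducible $SU(2)$-representation, which I would again view inside $SL_2(\mathbb{C})$ by the same reasoning.

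The remaining case is the hyperbolic one, and here I would argue geometrically rather than via gauge theory. The hyperbolic metric on $Y$ determines a discrete, faithful, and therefore irreducible holonomy representation $\rho \colon \pi_1(Y) \to PSL_2(\mathbb{C})$, and a theorem of Thurston (equivalently Culler) asserts that the holonomy of any orientable hyperbolic three-manifold lifts to $SL_2(\mathbb{C})$. The lift inherits irreducibility from $\rho$, completing the case analysis. The whole argument is then just an assembly of three inputs, and the main obstacle, namely the toroidal case, has been removed by \Cref{thm:toroidal}. This is precisely where the improvement over \cite[Theorem 9.4]{Zentner} arises: that proof had to reduce toroidal integer homology spheres inductively to their JSJ pieces using a finiteness theorem of Boileau-Rubinstein-Wang, whereas the present route disposes of the toroidal case in one stroke.
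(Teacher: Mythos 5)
Your argument is correct and is essentially the same as the paper's: reduce by Geometrization to the Seifert fibered, hyperbolic, and toroidal cases, handling them respectively by \cite[Theorem 2.1]{Menagerie}, by lifting the holonomy representation from $PSL_2(\mathbb{C})$ to $SL_2(\mathbb{C})$ \cite{Culler}, and by \Cref{thm:toroidal}. You make explicit the reduction to prime summands, which the paper leaves implicit in the preceding discussion, but the logical structure is identical.
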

\begin{proof} By the remarks above we have to consider three cases: Seifert fibered, hyperbolic, and toroidal integer homology three-spheres. 
Let $Y$ be an integer homology three-sphere other than $S^3$.  If $Y$ is hyperbolic, it admits an irreducible $SL_2(\mathbb{C})$-representation by lifting the holonomy representation to $PSL_2(\mathbb{C})$ \cite{Culler}.  If $Y$ is Seifert fibered, then $\pi_1(Y)$ admits an irreducible $SU(2)$-representation by \cite[Theorem 2.1]{Menagerie}.  If $Y$ is toroidal, the result now follows from Theorem~\ref{thm:toroidal}.  
\end{proof}

In order to generalize the holonomy perturbation machinery developed by the third author from non-trivial knots in $S^3$, we will need to establish a non-vanishing result which may be of independent interest.   

\begin{theorem}\label{instanton-0-surgery}
Let $J$ be a knot in an integer homology three-sphere $Y$ such that the exterior of $J$ is irreducible and boundary-incompressible.  Suppose that $I_*(Y) = 0$.  Then, $I^w_*(Y_0(J)) \neq 0$.  
\end{theorem}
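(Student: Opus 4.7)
My plan is to reduce the statement to Kronheimer--Mrowka's non-vanishing theorem for the sutured instanton Floer homology of taut sutured manifolds, applied to the knot exterior of $J$ with meridional sutures. The hypothesis $I_*(Y)=0$ does not seem to enter this approach, so I suspect it is included in the theorem statement only because it is the case of interest when the theorem is invoked in the proof of Theorem~\ref{thm:toroidal}.

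Setting $M_J := Y \setminus N(J)^\circ$, I would equip the boundary torus $\partial M_J$ with two parallel meridional sutures $\gamma_\mu$ and verify that $(M_J, \gamma_\mu)$ is a taut balanced sutured manifold. The irreducibility of $M_J$ is given; the two annular regions $R_\pm(\gamma_\mu)$ of $\partial M_J \setminus \gamma_\mu$ are $\pi_1$-injective in the torus, so their potential compressibility in $M_J$ would force $\partial M_J$ to compress, contradicting the hypothesis of $\partial$-incompressibility; and a genus-minimizing Seifert surface for $J$ (which exists because $Y$ is an integer homology sphere, and whose genus is positive because boundary-incompressibility rules out $J$ being trivial) restricts to a norm-minimizing decomposing surface for $(M_J, \gamma_\mu)$. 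Kronheimer--Mrowka's non-vanishing theorem, which builds on Gabai's sutured manifold hierarchies together with the Eliashberg--Thurston contact extension of taut foliations, then yields $SHI(M_J, \gamma_\mu) \neq 0$; by definition, this is the knot instanton invariant $KHI(Y, J)$.

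The last step is to identify $KHI(Y, J)$ with a summand of $I^w_*(Y_0(J))$. The $0$-surgery contains the closed surface $\hat\Sigma$ obtained by capping off a minimal-genus Seifert surface with the core disk of the surgery torus, and the operator $\mu(\hat\Sigma)$ acts on $I^w_*(Y_0(J))$. By Kronheimer--Mrowka, the generalized eigenvalues of $\mu(\hat\Sigma)$ lie in $\{2k : |k| \leq g(\hat\Sigma)\}$, and the top eigenspace is canonically isomorphic to $KHI(Y, J)$. Since $KHI(Y, J) \neq 0$, this forces $I^w_*(Y_0(J)) \neq 0$.

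The main obstacle I foresee is verifying that the top-eigenspace identification, which Kronheimer--Mrowka establish for knots in $S^3$ using the sutured excision principle, extends cleanly to knots in an arbitrary integer homology sphere. Because the excision argument is local around the knot exterior and uses no global property of the ambient manifold, I expect the generalization to be direct but not entirely routine. An alternative route, perhaps closer to the ingredients advertised in the abstract, would invoke the Floer surgery exact triangle $I_*(Y)\to I_*(Y_{-1}(J)) \to I^w_*(Y_0(J)) \to I_{*-1}(Y)$; the assumption $I_*(Y) = 0$ would then collapse this to an isomorphism $I_*(Y_{-1}(J)) \cong I^w_*(Y_0(J))$, reducing the problem to non-vanishing of the $-1$-surgery instanton homology, which again can be attacked by the same sutured machinery.
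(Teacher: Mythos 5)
Your main approach is genuinely different from the paper's and, as you anticipate, it hinges on a step that is not available off-the-shelf: identifying $KHI(Y,J)$ (or rather its top Alexander-graded piece) with the top generalized eigenspace of $\mu(\hat\Sigma)$ acting on $I^w_*(Y_0(J))$. Kronheimer--Mrowka establish this kind of identification via excision for knots in $S^3$, but I don't believe it appears in the literature for knots in an arbitrary integer homology three-sphere, and your remark that the extension would be ``direct but not entirely routine'' understates the work involved. The red flag you noticed is real: if this route went through, the theorem would hold without the hypothesis $I_*(Y)=0$, which would be a stronger result than the authors claim. That hypothesis is in fact load-bearing in the paper's proof.

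The paper instead argues by contradiction using only the formal properties of $I_*$ together with a surgery trick. Assuming $I^w_*(Y_0(J))=0$, the surgery exact triangle (Proposition~\ref{formal-prop}\eqref{exact}) and $I_*(Y)=0$ force $I_*(Y_{1/n}(J))=0$ for all $n$. Gordon's lemma identifies $Y_{1/4}(J)$ with $Y_1(J_{2,1})$, where $J_{2,1}$ is the $(2,1)$-cable, so $I_*(Y_1(J_{2,1}))=0$, and running the exact triangle once more gives $I^w_*(Y_0(J_{2,1}))=0$. But $Y_0(J_{2,1})$ decomposes along a torus into the original knot exterior $Y\setminus N(J)^\circ$ (irreducible, $\partial$-incompressible by hypothesis) glued to a Seifert-fibered piece with incompressible boundary; hence it is irreducible with $b_1=1$, contradicting Kronheimer--Mrowka's non-vanishing result (Proposition~\ref{formal-prop}\eqref{prop:nonvanishing}). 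The entire point of passing to the cable is to manufacture a $0$-surgery whose irreducibility is manifest, because irreducibility of $Y_0(J)$ itself is not known for knots in a general integer homology sphere.

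Your alternative suggestion at the end is also incomplete for a related reason: the exact triangle does give $I_*(Y_{-1}(J))\cong I^w_*(Y_0(J))$ under the hypothesis, but $Y_{-1}(J)$ is an integer homology sphere, so the ``sutured machinery'' (which produces non-vanishing results for closed manifolds only when $b_1>0$) does not apply to it directly, and you would be stuck exactly where you started.
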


Here, and throughout this article, $I_*$ denotes Floer's original version of instanton Floer homology and $I^w_*$ denotes instanton Floer homology for an admissible $SO(3)$-bundle with second Stiefel-Whitney class $w$.  (Note that $Y_0(J)$ admits only one such bundle.)  

The proof of \Cref{instanton-0-surgery} is a combination of (1) Kronheimer-Mrowka's non-vanishing result for instanton Floer homology of three-manifolds with a taut sutured manifold hierarchy \cite{KM_sutures}, (2) the surgery exact triangle in instanton Floer homology, and (3) Gordon's description of surgery on cable knots \cite{gordon}.  The argument is similar to Kronheimer-Mrowka's proof of {\em Property P} \cite{KM_P}.

While ~\Cref{instanton-0-surgery} itself may not be particularly interesting, it does lead to the following corollary, whose analogue in Heegaard Floer homology has been established by Ni \cite[p.1144]{Ni} and Conway and Tosun \cite{ConwayTosun}. The proof of the corollary appears in \Cref{sec:instantons} below. 

\begin{corollary}\label{cor:mazur}
Let $Y \neq S^3$ be an integer homology three-sphere which bounds a Mazur manifold.  Then, $I_*(Y) \neq 0$, and hence $\pi_1(Y)$ admits an irreducible $SU(2)$-representa\-tion.   
\end{corollary}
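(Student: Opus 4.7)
The plan is to produce a knot $J \subset Y$ with $Y_0(J) = S^1 \times S^2$ and irreducible, boundary-incompressible exterior, and then to apply the contrapositive of Theorem~\ref{instanton-0-surgery} together with the vanishing of $I^w_*(S^1 \times S^2)$. To construct $J$, I would turn the Mazur manifold $W$ upside down: its handle decomposition $h^0 \cup h^1 \cup h^2$ dualizes to a presentation of $W$ built on $Y \times I$ with one 2-handle, one 3-handle, and one 4-handle. Let $J \subset Y$ be the belt circle of this 2-handle. Since removing the 2-handle recovers $\partial(h^0 \cup h^1) = S^1 \times S^2$, some surgery on $J$ produces $S^1 \times S^2$; and because $Y$ is an integer homology sphere while $H_1(S^1 \times S^2) = \mathbb{Z}$, the surgery coefficient must be $0$ in the Seifert framing, giving $Y_0(J) = S^1 \times S^2$.

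Next, I would verify that the exterior $M = Y \setminus N(J)^\circ$ is irreducible and boundary-incompressible, using $Y \neq S^3$. For irreducibility, suppose $\Sigma \subset M$ is an essential $2$-sphere. Since $Y$ is a homology sphere, $\Sigma$ separates $Y$ into $A \supset N(J)$ and $B \subset M$ with $B$ not a ball. Then $\Sigma$ also separates $Y_0(J)$ into $A_0 := (A \setminus N(J)^\circ) \cup (S^1 \times D^2)$ and $B$; capping $\Sigma$ on each side yields closed manifolds $A_0', B'$ with $A_0' \# B' = S^1 \times S^2$. Uniqueness of prime decomposition then forces one of them to be $S^3$, and since $B' \neq S^3$ we must have $A_0' = S^3$. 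But $A_0'$ is the $0$-surgery on $J$ in $A' := A \cup_\Sigma B^3$, which is a summand of $Y$ and hence itself an integer homology sphere, so $J$ is null-homologous in $A'$; since $0$-surgery on a null-homologous knot always produces a manifold with $b_1 \geq 1$, this contradicts $A_0' = S^3$. Hence $M$ is irreducible. For boundary-incompressibility, an irreducible $3$-manifold with compressible torus boundary must be a solid torus, so $Y$ would then be a lens space and therefore $S^3$, contradicting $Y \neq S^3$.

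Finally, I would compute $I^w_*(Y_0(J)) = I^w_*(S^1 \times S^2) = 0$ via the instanton surgery exact triangle for the unknot $U \subset S^3$, whose three surgeries are $S^3$, $S^1 \times S^2$, $S^3$:
\[
\cdots \to I_*(S^3) \to I^w_*(S^1 \times S^2) \to I_*(S^3) \to \cdots.
\]
Since $I_*(S^3) = 0$, exactness forces $I^w_*(S^1 \times S^2) = 0$. The contrapositive of Theorem~\ref{instanton-0-surgery} applied to $J$ then gives $I_*(Y) \neq 0$. Because $Y$ is an integer homology sphere, the only reducible flat $SU(2)$-connection on $Y$ is trivial, so $I_*(Y) \neq 0$ produces an irreducible $SU(2)$-representation of $\pi_1(Y)$. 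The key content of the argument is the surgery-triangle computation $I^w_*(S^1 \times S^2) = 0$, which combined with Theorem~\ref{instanton-0-surgery} yields the non-vanishing of $I_*(Y)$; the $3$-manifold-theoretic verification of the hypotheses for $M$ is the most technically delicate step.
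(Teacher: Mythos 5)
Your proof is correct and follows essentially the same route as the paper's. The paper's own proof of this corollary also turns the Mazur handle decomposition upside down to produce $J$ with $Y_0(J) = S^2 \times S^1$, establishes that the exterior of $J$ is irreducible and boundary-incompressible (unless $Y = S^3$), and then reuses the $I^w_*(S^2 \times S^1) = 0$ computation together with the surgery exact triangle; the only cosmetic difference is that the paper re-runs the cabling argument from the proof of Theorem~\ref{instanton-0-surgery} rather than citing it as a black box the way you do, and you verify the irreducibility of the exterior by a prime-decomposition argument inside $Y$ whereas the paper phrases the same fact as a statement about homology-generating knots in $S^2 \times S^1$ --- the two verifications are equivalent in content.
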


Recall that Baldwin-Sivek prove that if an integer homology three-sphere $Y$ bounds a Stein domain with non-trivial homology, then $\pi_1(Y)$ admits an irreducible $SU(2)$-representation \cite[Theorem 1.1]{BaldwinSivek}.  In light of \Cref{conj:su2}, the following conjecture would be a natural extension of their work:
\begin{conjecture}
If $Y \neq S^3$ is an integer homology three-sphere which bounds a Stein integer homology ball, then $\pi_1(Y)$ admits an irreducible $SU(2)$-representation.
\end{conjecture}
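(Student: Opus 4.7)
Since the final statement is an open conjecture, I outline a plausible line of attack rather than a complete proof, modeled on the Mazur manifold argument (Corollary~\ref{cor:mazur}) but adapted to the richer Stein setting. By Eliashberg's topological characterization of Stein domains, any Stein integer homology ball $W$ admits a handle decomposition with one 0-handle, $k$ 1-handles, and $k$ 2-handles (the numbers of 1- and 2-handles are equal since $H_*(W;\Z) = H_*(B^4;\Z)$), with the 2-handles attached along a Legendrian link $L = L_1 \cup \cdots \cup L_k$ in the standard contact $(\#^k S^1 \times S^2, \xi_{std})$ with framings $\mathrm{tb}(L_i)-1$. The first step of the plan is to analyze the co-cores $J_1,\dots,J_k \subset Y$ of the 2-handles: attaching a 2-handle to $Y\times I$ along $J_i$ with the natural framing undoes the corresponding 2-handle of $W$, so iterating this procedure expresses $W$ as a cobordism built by 2-handle attachments along the $J_i$. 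This supplies a concrete family of candidate knots in $Y$ on which to attempt the contrapositive of Theorem~\ref{instanton-0-surgery}.

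The second step is to establish $I^w_*(Y_0(J_i)) \neq 0$ for some $i$. One would like to achieve this by leveraging the Stein structure of $W$: the handle picture above endows the 4-manifold obtained from $W$ by changing the framing on a single $L_i$ with a natural symplectic cobordism structure having $Y_0(J_i)$ as a boundary component. Ideally, Kronheimer-Mrowka's non-vanishing theorem for instanton Floer homology in the presence of symplectic fillings, combined with the surgery exact triangle relating $I^w_*(Y_0(J_i))$, $I_*(Y)$, and a third term, would force the desired non-vanishing. Assuming the exterior of $J_i$ can be arranged to be irreducible and boundary-incompressible (which should be achievable by Legendrian stabilization of $L_i$ before taking the co-core), the contrapositive of Theorem~\ref{instanton-0-surgery} would then yield $I_*(Y)\neq 0$, and hence an irreducible $SU(2)$-representation via the standard relationship between instanton Floer homology and the $SU(2)$ character variety.

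The main obstacle lies in the second step. In the Baldwin-Sivek theorem the hypothesis $H_2(W)\neq 0$ is precisely what furnishes a non-trivial cohomology class on the filling and allows Kronheimer-Mrowka's symplectic non-vanishing theorem to be applied to $W$ directly; when $W$ is a homology ball this route collapses, as there is no non-trivial admissible bundle on $W$ itself. Moreover, no instanton analogue of the Ozsv\'ath-Szab\'o or Plamenevskaya contact invariant with sufficiently strong non-vanishing properties is currently known to detect a Stein homology ball filling. Closing this gap seems to require either a genuinely new instanton-theoretic contact invariant, or a geometric reduction that artificially introduces homology into the filling, for example by drilling a neighborhood of a suitable nullhomologous Legendrian knot from $W$ and working with the resulting Stein cobordism with non-trivial second homology, so that the existing symplectic non-vanishing machinery applies.
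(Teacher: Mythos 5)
The statement you are addressing is stated in the paper as an open conjecture: the authors give no proof of it, and only observe that their Corollary~\ref{cor:mazur} settles the special case where the Stein homology ball is a Mazur manifold (one 1-handle and one 2-handle). You are therefore right not to claim a proof, and there is no argument in the paper to compare yours against. That said, your proposed strategy contains a concrete logical error beyond the gap you honestly flag. Theorem~\ref{instanton-0-surgery} has the form: if the exterior of $J$ is irreducible and boundary-incompressible and $I_*(Y)=0$, then $I^w_*(Y_0(J))\neq 0$. Its contrapositive therefore converts the \emph{vanishing} $I^w_*(Y_0(J))=0$ into $I_*(Y)\neq 0$ (given the exterior hypotheses); it tells you nothing if you instead establish $I^w_*(Y_0(J_i))\neq 0$, which is what your second step aims to prove. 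Indeed, $I^w_*(Y_0(J))\neq 0$ is precisely the \emph{conclusion} of the theorem under the hypothesis $I_*(Y)=0$, so it is fully consistent with $I_*(Y)=0$. The surgery exact triangle does not rescue this either: a nonzero $I^w_*(Y_0(J_i))$ can be absorbed entirely by the $I_*(Y_1(J_i))$ term.

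What the Mazur argument actually exploits is that for $k=1$ the belt sphere $J$ of the unique 2-handle satisfies $Y_0(J)=S^2\times S^1$, whose $I^w_*$ vanishes by Proposition~\ref{s2xs1}; feeding this vanishing into the contrapositive (via the cabling trick and Proposition~\ref{prop:nonvanishing}) yields $I_*(Y)\neq 0$. For $k>1$ one knows only that $0$-surgery on the entire $k$-component belt-sphere link returns $\#^k S^1\times S^2$; $0$-surgery on a single $J_i$ is some three-manifold with $b_1=1$ about which nothing is known, and there is no reason for its $I^w_*$ to vanish. So the genuine obstruction to extending the paper's argument is the need to locate a single knot in $Y$ with irreducible, boundary-incompressible exterior whose $0$-surgery has \emph{vanishing} $I^w_*$ (or to develop a link-surgery analogue of the exact triangle with enough control to run the induction) --- not, as your final paragraph suggests, the absence of an admissible bundle on $W$ preventing a symplectic non-vanishing theorem from applying, since even a successful non-vanishing result of that kind would point in the unusable direction.
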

Since Stein domains admit handlebody decompositions with no three-handles \cite{Eliashberg}, Corollary~\ref{cor:mazur} proves this conjecture for the boundaries of Stein integer homology balls with the simplest possible handle decompositions.

Theorem~\ref{thm:toroidal} also has two simple corollaries.  The first one is obtained by considering branched covers over satellite knots in $S^3$. Remarkably, its proof requires no use of gauge theory, beyond our main result. Its proof appears in Section~\ref{sec:other} below.

\begin{corollary}\label{cor:branched-cover}
Let $K$ be a prime, satellite knot in $S^3$. \Cref{conj:su2} holds for any non-trivial cyclic branched cover of $K$.  
\end{corollary}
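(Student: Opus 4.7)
The plan is to proceed by cases depending on $H_1(\Sigma;\Z)$, where $\Sigma := \Sigma_n(K)$ denotes the $n$-fold cyclic branched cover of $K$ for some $n \geq 2$. Since $K$ is a non-trivial knot, the positive resolution of the Smith conjecture ensures $\Sigma \neq S^3$, so \Cref{conj:su2} is not vacuous for this $\Sigma$.

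If $H_1(\Sigma;\Z) \neq 0$, the result follows immediately: $\Sigma$ is a rational homology sphere (as always for cyclic branched covers of knots in $S^3$), so $H_1(\Sigma;\Z)$ is a non-trivial finite abelian group. Any non-trivial homomorphism onto a cyclic group $\Z/m$ composed with $\Z/m \hookrightarrow S^1 \subset SU(2)$ then produces the required non-trivial representation of $\pi_1(\Sigma)$.

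In the remaining case, $\Sigma$ is an integer homology three-sphere, and the strategy is to show $\Sigma$ is toroidal so that \Cref{thm:toroidal} supplies the irreducible $SU(2)$-representation. Since $K$ is a satellite knot, $S^3 \setminus K$ contains an essential torus $T$ bounding a solid torus $V \supset K$ — with pattern $P := K \subset V$ — on one side and the exterior $E(C)$ of a non-trivial companion knot $C$ on the other. I would lift $T$ to obtain a disjoint union of tori $\tilde T \subset \Sigma$ separating $\Sigma$ into the $n$-fold cyclic branched cover $\tilde V$ of $(V, P)$ and a cyclic unbranched cover $\tilde E$ of $E(C)$. On the companion side, incompressibility of $\tilde T$ in $\tilde E$ is immediate: $\partial E(C)$ is incompressible in $E(C)$ because $C$ is non-trivial, and incompressibility of a boundary torus lifts to finite covers by $\pi_1$-injectivity.

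The hard part will be to rule out a compressing disk on the pattern side, i.e.\ to show $\partial \tilde V$ is incompressible in $\tilde V$. Here I would use primality of $K$ together with the fact that $K$ is a genuine satellite, so $P$ is not isotopic to the core of $V$. Applying the equivariant sphere theorem to the $\Z/n$-action on $\Sigma$, combined with primality of $K$, should yield irreducibility of $\Sigma$ and hence of $\tilde V$; an irreducible compact 3-manifold with compressible torus boundary is a solid torus, so it suffices to rule out that case. But a solid torus with a smooth $\Z/n$-rotation action has only its core as possible one-dimensional fixed set, so if $\tilde V$ were a solid torus the branch locus would force $P$ to be the core of $V$, contradicting that $K$ is a satellite knot. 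Therefore $\tilde T$ is essential, $\Sigma$ is toroidal, and \Cref{thm:toroidal} completes the argument.
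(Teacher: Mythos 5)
Your plan follows the same high-level route as the paper's proof: dispose of the case $H_1(\Sigma_n(K))\neq 0$ with an abelian representation, and otherwise show $\Sigma_n(K)$ is toroidal by lifting the companion torus $T$ to $\tilde T\subset\Sigma_n(K)$ and proving $\tilde T$ is incompressible on both sides (companion side by $\pi_1$-injectivity of covers; pattern side being the hard part). Where you differ is in the pattern-side argument. The paper (its Lemma following the corollary) applies the Meeks--Yau equivariant Dehn's lemma directly to a hypothetical compressing disk, shows the translates descend to an embedded meridian disk of $V$, and uses Riemann--Hurwitz to conclude that disk meets $P$ at most once --- contradicting that $P$ is neither a connect-sum pattern (primality) nor contained in a ball (the torus is essential). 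You instead deduce irreducibility of $\tilde V$ from primality via the equivariant sphere theorem, reduce the compressible case to $\tilde V$ being a solid torus, and then invoke the statement that a smooth $\mathbb{Z}/n$-action on a solid torus with one-dimensional fixed set must have the core as its fixed set, so that $P$ is the core of $V$ --- contradicting that $K$ is a genuine satellite.

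Both routes are sound, but be aware that your final step is not an elementary observation: the assertion that every smooth orientation-preserving $\mathbb{Z}/n$-action on $D^2\times S^1$ with a circle of fixed points is conjugate to the standard rotation $(z,w)\mapsto(\zeta z,w)$ --- which is what you need, both to locate the fixed set at the core \emph{and} to conclude that its image downstairs is the core of $V$ --- is a genuine theorem (it follows, e.g., from the equivariant loop theorem of Meeks--Yau). In other words, the hard content of the paper's Lemma is hidden inside your citation-free claim about ``rotation actions,'' and you should say what you are invoking. Once that is granted, the passage from ``fixed set is the core of $\tilde V$'' to ``$P$ is the core of $V$'' deserves a sentence as well: if $\gamma$ is the core then $\tilde V\setminus N(\gamma)\cong T^2\times I$, on which $G$ acts freely and orientation-preservingly, so the quotient $V\setminus N(P)$ is again $T^2\times I$ and $P$ is the core. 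Minor points worth addressing: $\tilde T$ may be disconnected (when $\gcd(n,w)>1$ for $w$ the winding number of $P$), in which case $\tilde V$ has several boundary tori; the implication ``irreducible with a compressible boundary torus $\Rightarrow$ solid torus'' still applies and in particular forces a single boundary component. You should also confirm that irreducibility of $\Sigma$ passes to $\tilde V$, which is easiest to see by applying the equivariant sphere theorem to the action on $\tilde V$ itself and pushing the resulting equivariant sphere down into the solid torus $V$, where it must bound a ball on the side not containing $P$.
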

%

To obtain the second corollary, define a graph manifold integer homology three-sphere to be a closed, orientable three-manifold whose torus decomposition has no hyperbolic pieces.\footnote{Some authors impose additional constraints, such as primeness or a non-trivial torus decomposition.}  As discussed above, the fundamental groups of Seifert integer homology three-spheres other than $S^3$ admit irreducible $SU(2)$-representations, and hence we obtain:

\begin{corollary}
Let $Y$ be a graph manifold integer homology three-sphere other than $S^3$.  Then $\pi_1(Y)$ admits an irreducible $SU(2)$-representation.
\end{corollary}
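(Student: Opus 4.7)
The plan is to split the argument according to the JSJ decomposition of $Y$ and, case by case, invoke the two results already assembled in the excerpt: the theorem of Fintushel-Stern for Seifert fibered integer homology three-spheres \cite[Theorem 2.1]{Menagerie} and \Cref{thm:toroidal}.

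First I would reduce to the case in which $Y$ is prime. If $Y$ decomposes as a non-trivial connected sum, then because $Y$ is an integer homology three-sphere every summand is itself an integer homology three-sphere, and each prime summand is itself a graph manifold in the sense of the paper: forming a connected sum cannot introduce a hyperbolic piece into the torus decomposition. Since $Y \neq S^3$, at least one summand $Y_1$ is not $S^3$, and collapsing the complementary summand to a point yields a degree-one map $Y \to Y_1$, which is surjective on $\pi_1$. An induction on the number of prime factors therefore reduces the problem to the case in which $Y$ itself is prime, hence irreducible.

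Now consider the JSJ decomposition of such a prime graph manifold $Y \neq S^3$. If the decomposition is trivial, then $Y$ is a single JSJ piece; since by hypothesis no piece is hyperbolic, $Y$ must be Seifert fibered, and since $Y \neq S^3$ the cited Seifert case supplies an irreducible $SU(2)$-representation of $\pi_1(Y)$. If the decomposition is non-trivial, then any of the splitting tori is an embedded incompressible torus in $Y$, so $Y$ is toroidal and \Cref{thm:toroidal} applies directly to produce the desired representation.

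There is no substantive obstacle here; the two theorems doing the real work are already in hand, and the corollary is essentially a classification-and-bookkeeping argument. The only mildly delicate step is checking that the reduction to the prime case is compatible with the paper's broad definition of graph manifold integer homology three-sphere, but since connected sums do not introduce hyperbolic JSJ pieces and the collapse map $Y \to Y_1$ is $\pi_1$-surjective, this verification is routine.
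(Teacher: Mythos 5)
Your proposal is correct and follows the paper's own (implicit) argument: split via the geometric decomposition into the Seifert fibered case (handled by \cite[Theorem 2.1]{Menagerie}) and the toroidal case (handled by \Cref{thm:toroidal}). The only thing you add is an explicit reduction to the prime case via a $\pi_1$-surjective collapse map, which the paper leaves tacit; that step is correct and does no harm.
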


A first alternate proof of this corollary can be obtained by noting that every integer homology three-sphere other than $S^3$ which is a graph manifold can be realized as the branched double cover of a non-trivial (arborescent) knot in $S^3$, see \cite{Bonahon-Siebenmann}.  A second alternate proof can be obtained by noting that every prime graph manifold integer homology three-sphere $Y$ other than $S^3$ or $\Sigma(2,3,5)$ admits a co-orientable taut foliation by \cite[Corollary 0.3]{BoileauBoyer}. This implies that $I_*(Y) \neq 0$, and this in turn implies that there exists an irreducible $SU(2)$-representation.  On the other hand, the binary dodecahedral group is well-known to admit two conjugacy classes of irreducible representations, completing the proof.  Note that, unlike for Seifert integer homology three-spheres, the Casson invariant of a non-trivial graph manifold can be zero. For example, the three-manifold $Y$ obtained as the splice of two copies of the exterior of the right handed trefoil has trivial Casson invariant \cite{FukuharaMaruyama, BoyerNicas}.  

\subsection*{Outline} In \Cref{sec:instantons} we establish the main technical result \Cref{instanton-0-surgery} whose strategy also leads us to prove \Cref{cor:mazur} about Mazur manifolds.  In \Cref{sec:pillowcase}, we review the pillowcase construction and prove \Cref{thm:toroidal} in subsection \ref{main}, using a technical result about invariance under holonomy perturbations in instanton Floer homology reviewed in \Cref{sec:perturbations}. The material in Section \ref{sec:perturbations} is mostly known (or at least folklore knowledge) and can be found elsewhere, but the reader might appreciate our synthesis of the role of holonomy perturbations and our sketch of invariance in order to follow more easily through the proof of our main results.  
In \Cref{sec:other}, we prove \Cref{cor:branched-cover}.  
\subsection*{Acknowledgements} Tye Lidman was partially supported by NSF grant DMS-1709702 and a Sloan Fellowship. Juanita Pinz\'on-Caicedo is grateful to the Max Planck Institute for Mathematics in Bonn for its hospitality and financial support while a portion of this work was prepared for publication. She was partially supported by NSF grant DMS-1664567, and by Simons Foundation Collaboration grant 712377. Raphael Zentner is grateful to the DFG for support through the Heisenberg program.  We would also like to thank John Baldwin, Paul Kirk, and Tom Mrowka for helpful discussions.

\section{Instanton Floer homology of 0-surgery}\label{sec:instantons}
In this section we rely solely on formal properties of instanton Floer homology to prove \Cref{instanton-0-surgery} regarding the instanton Floer homology of $0$-surgeries, and \Cref{cor:mazur} regarding the instanton Floer homology of integer homology three-spheres that bound Mazur manifolds. More concrete aspects of instanton Floer homology groups, in particular those regarding perturbations, appear in \Cref{sec:perturbations} but in this section we wish to place the focus on the usefulness of formal properties for purposes of computations.

We consider instanton Floer homology for admissible bundles, as introduced by Floer \cite{floer}. For integer homology three-spheres, this is the trivial $SU(2)$-bundle over $Y$. For three-manifolds with positive first Betti number, this is an $SO(3)$-bundle $P \to Y$ such that there is a surface $\Sigma \subseteq Y$ on which the second Stiefel-Whitney class $w:=w_2(P)$ evaluates non-trivially, that is, such that $\langle w_2(P) , [\Sigma] \rangle \neq 0$.  The instanton Floer homology group is defined as a version of Morse homology of the Chern-Simons function on the space of connections on the admissible bundle \cite{floer, Donaldson}. It is denoted by $I_*(Y)$ for the trivial bundle on integer homology three-spheres, and it is denoted by $I^w_*(Y)$ for $SO(3)$-bundles $P \to Y$ with $w_2(P) = w$. We remark here that for an integer homology three-sphere, the trivial connection is isolated and is the unique reducible connection (up to gauge equivalence). In the other cases, the admissibility condition ensures that there are no reducible flat connections on the bundle.  

In the case of a knot $K$ in an integer homology three-sphere $Y$, there is a unique admissible bundle on the $0$-surgery $Y_0(K)$, because $H^2(Y_0(K);\Z/2) \cong \Z/2$.  Therefore, the instanton Floer homology group $I^w(Y_0(K))$ is defined without ambiguity. 


\begin{proposition}\label{formal-prop} Instanton Floer homology satisfies the following properties:
\crefalias{enumi}{proposition}
\begin{enumerate}[label=(\arabic*), ref=\theproposition (\arabic*)]
\item\label{exact} For $Y$ an integer homology three-sphere and any $n\in\Z$, the three-manifolds $Y_{1/n}(K)$, $Y_{1/(n+1)}(K)$, and $Y_{0}(K)$ fit into an exact triangle $$\xymatrix{&I_*(Y_{1/n}(K))\ar[dr]&\\I_*(Y_{1/(n+1)}(K))\ar[ru]&&I_*^w(Y_{0}(K)).\ar[ll]}$$ 
\item\label{prop:nonvanishing} If $M$ is an irreducible three-manifold with $b_1(M) = 1$, then $I^w_*(M) \neq 0$.  
\item\label{homologysu2cyclic} For $Y$ an integer homology three-sphere, if $\pi_1(Y)$ admits no irreducible $SU(2)$-representations, then $I_*(Y) = 0$.  
\item\label{s2xs1} $I^w_*(S^2\times S^1)=0$.
\end{enumerate}
\end{proposition}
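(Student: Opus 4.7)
The plan is to address the four items separately, drawing on well-established results in instanton Floer homology. For \textbf{(1)}, I would invoke Floer's surgery exact triangle (originally due to Floer, with refinements by Braam--Donaldson): applied to the dual core knot $K^* \subset Y_{1/n}(K)$ with the natural framing induced from the surgery solid torus, the three ``triangle'' surgeries yield exactly $Y_{1/n}(K)$, $Y_{1/(n+1)}(K)$, and $Y_0(K)$, with the $I^w_*$ in the last slot reflecting the unique non-trivial admissible $SO(3)$-bundle on $Y_0(K)$. For \textbf{(2)}, I would appeal directly to Kronheimer--Mrowka's non-vanishing theorem for instanton Floer homology of closed three-manifolds with a taut sutured hierarchy \cite{KM_sutures}: an irreducible $M$ with $b_1(M) = 1$ admits a Thurston-norm-minimizing surface $\Sigma$ dual to the generator of $H^1(M;\Z)$, and cutting along $\Sigma$ (and iterating) produces a taut hierarchy, forcing $I^w_*(M) \neq 0$ for $w$ Poincar\'e dual to $[\Sigma]$.

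For \textbf{(3)}, I would use the Morse-theoretic description: $I_*(Y)$ for an integer homology three-sphere $Y$ is the homology of a chain complex generated by critical points of a small generic holonomy perturbation of the Chern--Simons functional, with the trivial connection excluded as the unique reducible. If $\pi_1(Y)$ admits no irreducible $SU(2)$-representation, then the only critical point of the unperturbed functional is the trivial connection, which is non-degenerate since $H^1(Y;\su(2)) = 0$; sufficiently small generic perturbations therefore introduce no new critical points, leaving the chain complex empty and $I_*(Y) = 0$. For \textbf{(4)}, the admissible $SO(3)$-bundle $P \to S^2 \times S^1$ has $w_2(P)$ pairing non-trivially with $[S^2 \times \{\mathrm{pt}\}]$, and hence restricts to the unique non-trivial $SO(3)$-bundle over $S^2$. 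Since $\pi_1(S^2) = 0$, every flat $SO(3)$-bundle over $S^2$ is trivial, so $P$ admits no flat connection; no (perturbed) critical points exist, and $I^w_*(S^2 \times S^1) = 0$.

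The main conceptual obstacle is \textbf{(2)}, which rests on the full strength of Kronheimer--Mrowka's sutured instanton Floer theory and their proof that the instanton Floer homology of a closed three-manifold containing a taut surface is non-vanishing. The subtlest formal point is the perturbation argument in \textbf{(3)}: one must ensure that small perturbations do not spawn spurious critical points near the trivial connection, which is exactly what the non-degeneracy $H^1(Y;\su(2)) = 0$ and an implicit function theorem argument provide, as reviewed in Section~\ref{sec:perturbations}.
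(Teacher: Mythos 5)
Your proof is correct and follows essentially the same approach as the paper for items (1), (2), and (3): the surgery exact triangle is Floer's, with the Braam--Donaldson refinement; (2) is a direct appeal to Kronheimer--Mrowka's sutured non-vanishing theorem; and (3) is the observation that the Floer chain complex has no generators when there are no irreducible flat connections. For item (4), you take the direct route: the admissible bundle over $S^2 \times S^1$ admits no flat connection, so the chain complex is empty. The paper gives this as an alternative, but its primary derivation of (4) is purely formal, combining (1) and (3) via the surgery exact triangle for the unknot in $S^3$ (all three terms with $I_*(S^3)$ vanish, forcing $I^w_*(S^2 \times S^1) = 0$). The exact-triangle route has the mild advantage of not requiring any separate argument that small perturbations near an empty critical set create no new critical points; your direct route is equally valid but implicitly relies on that compactness fact. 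Your phrasing of the direct argument (restricting to $S^2$, where flat bundles are trivial by simple-connectivity) is equivalent to the paper's (every $SO(3)$-representation of $\pi_1(S^2 \times S^1) \cong \Z$ lifts to $SU(2)$, so $w_2$ of a flat bundle vanishes), and both are correct.
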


\begin{proof} The surgery exact triangle in \eqref{exact} is originally due to Floer \cite[Theorem 2.4]{floer} with details given in \cite[Theorem 2.5]{Braam-Donaldson}. The non-triviality result in \eqref{prop:nonvanishing} is precisely \cite[Theorem 7.21]{km-sutures}. Next, \eqref{homologysu2cyclic} follows from \cite[Theorem 1]{FloerOriginal}, since if $\pi_1(Y)$ admits no irreducible $SU(2)$-representations, then the generating set for the instanton Floer chain groups is empty. Finally, \eqref{s2xs1} follows from \eqref{homologysu2cyclic} and \eqref{exact}, by considering the surgery exact triangle for surgery on the unknot in $S^3$.  Alternatively, this follows from the definition of $I^w_*$ (see \Cref{sec:perturbations}), since $\pi_1(S^2 \times S^1)$ admits no representations to $SO(3)$ which do not lift to $SU(2)$.  
\end{proof}

We will be particularly interested in integer homology three-spheres whose fundamental groups do not admit irreducible $SU(2)$-representations.  We therefore establish the following definition.
\begin{definition}
An integer homology three-sphere $Y$ is {\em $SU(2)$-cyclic} if every $SU(2)$-representation of $\pi_1(Y)$ is trivial.  
\end{definition} 
Notice that \Cref{conj:su2} states that $S^3$ is the only $SU(2)$-cyclic integer homology three-sphere. 

Having stated the above formal properties of instanton Floer homology, the proofs of \Cref{instanton-0-surgery} and \Cref{cor:mazur} now follow easily.


\subsection{Non-vanishing of Instanton Floer Homology}

In this subsection we illustrate the way the formal properties from \Cref{formal-prop} can be used to show that the instanton homology groups are non-zero in two cases: (1) three-manifolds obtained as 0-surgery along knots in $SU(2)$-cyclic integer homology three-spheres whose exterior is irreducible and boundary incompressible, and (2) three-manifolds other than $S^3$ obtained as the boundary of a Mazur manifold.

\begin{proof}[Proof of \Cref{instanton-0-surgery}] 
We assume $I^w_*(Y_0(K))$ is trivial and argue by contradiction.  By \Cref{exact} the three-manifolds $Y_{1/n}(K)$, $Y_{1/(n+1)}(K)$, and $Y_{0}(K)$ fit together in an exact triangle $$\xymatrix{&I_*(Y_{1/n}(K))\ar[dr]&\\I_*(Y_{1/(n+1)}(K))\ar[ru]&&I_*^w(Y_{0}(K)).\ar[ll]}$$ 
The assumption $I^w_*(Y_0(K)) = 0$ implies that there is an isomorphism $$I_*(Y_{1/(n+1)}(K)) \cong I_*(Y_{1/n}(K)) \text{ for each } n\in\Z.$$ In particular, if $n=0$ then $I_*(Y_{1}(K))\cong I_*(Y)=0$ thus showing that for all $n\in\Z$, 
\begin{equation}\label{1/n}I_*(Y_{1/n}(K))=0. \end{equation} 
Now, 
a result of Gordon \cite[Lemma 7.2]{gordon} shows that $Y_{1/4}(K)$ is diffeomorphic to $Y_{1}(K_{2,1})$, where $K_{2,1}$ is the $(2,1)$-cable of the knot $K$ (See \Cref{fig:satellite} for an example of $K_{2,1}$). This together with \Cref{1/n} implies $I_*(Y_{1}(K_{2,1}))=0$. An iteration of an exact triangle as in \Cref{exact} for surgeries along $K_{2,1}$ gives $I^w_*(Y_{0}(K_{2,1}))= 0$. 

We now consider a decomposition of $Y_{0}(K_{2,1})$ that includes the knot exterior of $K$ in $Y$.  Denote by $C_{2,1}$ a closed curve that lies in the boundary of a  ``small'' solid torus $ S^1\times \partial D_{\scaleto{1/2}{5pt}}^2\subset S^1\times D^2 $, and representing the class $2[S^1]+[\partial D_{\scaleto{1/2}{5pt}}^2]$ in $H_1(S^1\times \partial D_{\scaleto{1/2}{5pt}}^2)$. Notice that the $0$-framing of $K_{2,1}$ in $Y$ induces the framing on $C_{2,1}$ determined by the curve $\lambda$ in $\partial N\left(C_{2,1}\right)$ that represents the class $2[S^1]$ in $H_1(S^1\times \partial D^2)$ (see \cite[pg. 692]{gordon}). Therefore, the manifold $Y_{0}(K_{2,1})$ can be expressed as the union of the knot exterior $Y\setminus N(K)$, and the result of Dehn surgery on $S^1\times D^2$ along the curve $C_{2,1}$ with framing given by $\lambda$. By hypothesis the knot exterior $Y\setminus N(K)$ is irreducible and boundary-incompressible, and by \cite[Lemma 7.2]{gordon} the 0-surgery along the curve $C_{2,1}$ is a Seifert fibred space with incompressible boundary. Hence $Y_{0}(K_{2,1})$ is an irreducible closed three-manifold with first Betti number equal to $1$ and with trivial instanton Floer homology. However, this contradicts \Cref{prop:nonvanishing}, which says that $I^w_*(Y_0(K_{2,1})) \neq 0$.  
\end{proof}

Next, consider integer homology three-spheres that bound a Mazur manifold, that is, a four-manifold that admits a handle decomposition in terms of exactly one 0-handle, one 1-handle, and one 2-handle as in \Cref{Mazur}. Then we have:

\begin{proof}[Proof of \Cref{cor:mazur}]
If $Y$ bounds a Mazur manifold, then there exists a knot $J$ in $Y$ such that $Y_0(J) = S^2 \times S^1$.  Moreover, if $I_*(Y) = 0$, a combination of the surgery exact triangle from ~\Cref{exact} and the computation $I^w_*(S^2 \times S^1) = 0$ from~\Cref{s2xs1} shows once again that $I_*(Y_{1/4}(J)) = 0$. The same argument used above in the proof of \Cref{instanton-0-surgery} then gives $I_*(Y_0(J_{2,1})) = 0$.  However, it is easy to see that the exterior of a knot in $S^2 \times S^1$ which generates homology is either irreducible and boundary-incompressible or a solid torus.  The latter case corresponds to $Y = S^3$, so by assumption we have that $Y_0(J_{2,1})$ is irreducible with $b_1 = 1$.  But this contradicts \Cref{prop:nonvanishing}.     
\end{proof}

\begin{figure}
    \centering
    \def\svgwidth{0.3\textwidth}
\begingroup%
  \makeatletter%
  \providecommand\color[2][]{%
    \errmessage{(Inkscape) Color is used for the text in Inkscape, but the package 'color.sty' is not loaded}%
    \renewcommand\color[2][]{}%
  }%
  \providecommand\transparent[1]{%
    \errmessage{(Inkscape) Transparency is used (non-zero) for the text in Inkscape, but the package 'transparent.sty' is not loaded}%
    \renewcommand\transparent[1]{}%
  }%
  \providecommand\rotatebox[2]{#2}%
  \newcommand*\fsize{\dimexpr\f@size pt\relax}%
  \newcommand*\lineheight[1]{\fontsize{\fsize}{#1\fsize}\selectfont}%
  \ifx\svgwidth\undefined%
    \setlength{\unitlength}{240.30021429bp}%
    \ifx\svgscale\undefined%
      \relax%
    \else%
      \setlength{\unitlength}{\unitlength * \real{\svgscale}}%
    \fi%
  \else%
    \setlength{\unitlength}{\svgwidth}%
  \fi%
  \global\let\svgwidth\undefined%
  \global\let\svgscale\undefined%
  \makeatother%
  \begin{picture}(1,1.28993674)%
    \lineheight{1}%
    \setlength\tabcolsep{0pt}%
    \put(0,0){\includegraphics[width=\unitlength,page=1]{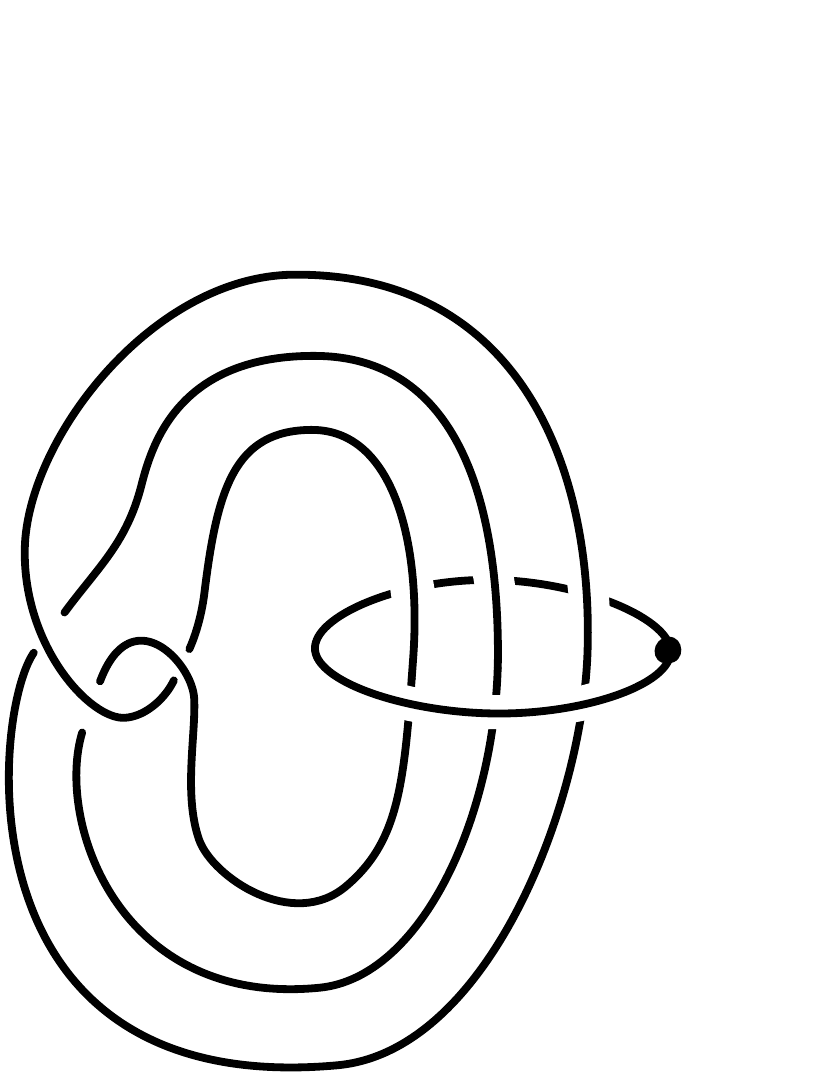}}%
    \put(0.30648546,0.99582491){\color[rgb]{0,0,0}\makebox(0,0)[lt]{\lineheight{1.25}\smash{\begin{tabular}[t]{l}$n$\end{tabular}}}}%
  \end{picture}%
\endgroup%

    \caption{A Mazur manifold with one two-handle attached with framing given by $n$ for some $n\in\N$.}\label{Mazur}
\end{figure}

%
%
%

\section{The pillowcase alternative}\label{sec:pillowcase}
In this section we recall the relevant background on $SU(2)$-character varieties and generalize work of the third author \cite{Zentner} to prove \Cref{thm:toroidal}, our main result. 

\subsection{The pillowcase}
Given a connected manifold $M$, we denote by $$R(M) = \Hom(\pi_1(M),SU(2))/SU(2)$$ the space of $SU(2)$-representations of its fundamental group, up to conjugation. We will write $R(M)^*$ for the subset of irreducible representations.  For example, the space $R(T^2)$ 
is identified with the {\em pillowcase}, an orbifold homeomorphic to a two-dimensional sphere with four corner points. To see this, notice that since $\pi_1(T^2) \cong \Z^2$ is abelian, the image of any representation $\rho\colon \pi_1(T^2) \to SU(2)$ is contained in a maximal torus subgroup of $SU(2)$. Up to conjugation, this torus can be identified with the circle group consisting of matrices of the form $\begin{bmatrix} e^{i \theta} & 0 \\ 0 & e^{-i \theta} \end{bmatrix}$ for $\theta \in [0,2\pi]$. Thus, if we denote the generators of $\pi_1(T^2) \cong \Z^2$ by $m$ and $l$, then, again after conjugation, a representation $\rho\in R(T^2)$ is determined by
\[
	\rho(m) = \begin{bmatrix} e^{i \alpha} & 0 \\ 0 & e^{-i \alpha} \end{bmatrix} \hspace{0.5cm} 
	\text{and} \hspace{0.5cm}
	\rho(l) = \begin{bmatrix} e^{i \beta} & 0 \\ 0 & e^{-i \beta} \end{bmatrix},
\]
and hence we can associate to $\rho$ a pair $(\alpha,\beta) \in [0,2\pi] \times [0,2 \pi]$. However, conjugation of $\rho$ by the element $\begin{bmatrix} 0 & 1 \\ -1 & 0 \end{bmatrix}$ gives rise to the representation associated to the pair $(2\pi - \alpha, 2 \pi - \beta)$. This is the only ambiguity, however, as can be seen using the fact that the trace of an element in $SU(2)$ determines its conjugacy class. Therefore $R(T^2)$ is isomorphic to 
the quotient of the fundamental domain $[0,\pi] \times [0,2 \pi]$ by identifications on the boundary as indicated in \Cref{pillowcase trefoil}.

\begin{figure}[h!]
\def\svgwidth{0.75\textwidth}\input{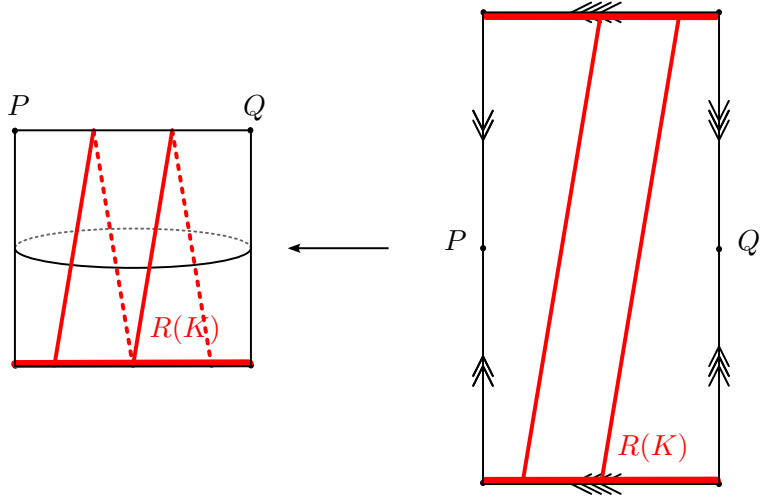}
\caption{The gluing pattern for obtaining the pillowcase from a rectangle, 
and the image of the representation variety $R(K)$ of the trefoil in the pillowcase.}
\label{pillowcase trefoil}
\end{figure}

If we have a three-manifold $M$ with torus boundary, then the inclusion $i\colon T^2 \cong \partial M \hookrightarrow M$ induces a map $i^*\colon R(M) \to R(T^2)$ by restricting a representation to the boundary. For instance, if $K$ is a knot in a three-manifold $Y$, then the three-manifold $Y(K) := Y \setminus N(K)^\circ$ obtained by removing the interior of a tubular neighborhood $N(K)$ of $K$ from $Y$, is a three-manifold with boundary a two-dimensional torus. \Cref{pillowcase trefoil} shows the image of $R(S^3(K))$ when $K$ is the right handed trefoil in $S^3$, once in the pillowcase, and once in the fundamental domain $[0,\pi] \times [0,2 \pi]$. Here we use the convention that the first coordinate corresponds to $\rho(m_K)$, where $m_K$ is a meridian to the knot $K$, and the second coordinate corresponds to $\rho(l_K)$, where $l_K$ is a longitude of the knot $K$. 

For a knot $K$ in a three-manifold $Y$ there is a well-defined notion of meridian $m_K$, and if the knot is nullhomologous, there is a well-defined notion of longitude $l_K$. In particular, this is the case for any knot $K$ in an integer homology three-sphere $Y$. In what follows, we will use the notation $R(K):= R(Y(K))$ if it is clear which integer homology three-sphere $Y$ we have in mind, and we will stick to the above convention of the coordinates in $R(T^2)$ corresponding to the meridian and longitude of $K$. With these conventions, all abelian representations in $R(K)$ map under $i^*$ to the thick red line $\{ \beta = 0 \mod{2\pi \Z} \}$ `at the bottom' of the pillowcase $R(T^2)$. Indeed, $l_K$ is a product of commutators in the fundamental group of the knot complement, so an abelian representation necessarily maps $l_K$ to the identity.  Furthermore, for any $\alpha\in [0,\pi]$ we can find an abelian representation of $R(K)$ whose restriction to $R(T^2)$ corresponds to $(\alpha,0)$.

If we cut the pillowcase open along the lines $a_0:= \{\alpha = 0 \mod 2 \pi \mathbb{Z}\}$ and $a_\pi:=\{\alpha = \pi \mod 2 \pi \mathbb{Z}\}$, we obtain a cylinder $C = [0,\pi] \times \R/2 \pi \Z$.  In the gluing pattern of Figure \ref{pillowcase trefoil} this means that we do not perform the identifications along the four indicated vertical boundary lines.  

Our main goal is to prove Theorem~\ref{pillowcase alternative} below, which asserts that if $K$ is a knot in an $SU(2)$-cyclic integer homology three-sphere whose 0-surgery has non-trivial instanton homology, then 
the image of $R(K)$ in the pillowcase contains a homologically non-trivial embedded closed curve in the cylinder $C$. In order to derive Theorem \ref{thm:toroidal} from this, we need a more refined statement, namely, that there is a homologically non-trivial embedded closed curve in $i^*(R(K))$ that is disjoint from a neighborhood of the two lines $a_0$ and $a_\pi$. Notice that for a knot in $S^3$, there are no representations with $\rho(l_K) \neq \id$ and $\rho(m_K) = \pm \id$. This is because the fundamental group of a knot complement in $S^3$ is normally generated by the meridian of the knot. In particular, there are no representations in $i^*(R(K))$ that have coordinates $(\alpha,\beta)$ with $\beta \neq 0$, and $\alpha = 0$ or $\alpha = \pi$.  In \cite[Proposition 8.1]{Zentner}, it is shown that the image of $R(K)^*$, the subset of irreducible representations in $R(K)$, in fact stays outside a neighborhood of these two lines. We begin with a generalization of this fact. 

\begin{lemma}\label{lem:avoids-lines}
Let $K$ be a knot in an $SU(2)$-cyclic integer homology three-sphere $Y$.  There is a neighborhood of the lines $\{\alpha = 0 \mod 2 \pi \mathbb{Z}\}$ and $\{\alpha = \pi \mod 2 \pi \mathbb{Z}\}$ in the pillowcase which is disjoint from the image of $R(K)^*$.  
\end{lemma}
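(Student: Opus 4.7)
The plan is to adapt the argument of \cite[Proposition 8.1]{Zentner} (originally for $S^3$) to the $SU(2)$-cyclic setting, in three steps.

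First, I identify which representations $\rho \in R(K)$ map onto the vertical lines $\{\alpha = 0 \mod 2\pi\mathbb{Z}\}$ or $\{\alpha = \pi \mod 2\pi\mathbb{Z}\}$, i.e.\ satisfy $\rho(m_K) = \pm I$. If $\rho(m_K) = I$, then $\rho$ descends through the quotient $\pi_1(Y(K))/\langle\langle m_K \rangle\rangle \cong \pi_1(Y)$, and the $SU(2)$-cyclic hypothesis forces $\rho$ to be the trivial representation $\rho_0$. If instead $\rho(m_K) = -I$, the twisted map $\tilde\rho(g) := (-I)^{\ell(g)}\rho(g)$, where $\ell(g) \in \mathbb{Z} = H_1(Y(K))$ is the linking number of $g$ with $K$, is a genuine $SU(2)$-representation (since $\pm I$ is central) satisfying $\tilde\rho(m_K) = I$, so by the previous case $\tilde\rho$ is trivial and $\rho(g) = (-I)^{\ell(g)}$ takes values in $\{\pm I\}$. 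Thus $\rho$ is the unique non-trivial central representation $\rho_{\pm}$ arising from the mod-$2$ abelianization $\pi_1(Y(K)) \to \mathbb{Z}/2 \to \{\pm I\}$. In particular, the only points in $i^*(R(K))$ on these two lines are the corners $i^*(\rho_0) = (0,0)$ and $i^*(\rho_{\pm}) = (\pi, 0)$, and both are images of reducible representations.

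Second, I show that neither $\rho_0$ nor $\rho_{\pm}$ has an irreducible representation accumulating at it in $R(K)$. At any central representation $\rho_c \in \{\rho_0, \rho_{\pm}\}$, the adjoint action of $\rho_c(\pi_1)$ on $\su(2)$ is trivial, so the cocycle space $Z^1(\pi_1(Y(K)); \su(2)) = \Hom(H_1(Y(K)), \su(2)) \cong \su(2)$ is three-dimensional, while coboundaries vanish. Each cocycle $\eta$ integrates to a $1$-parameter family $\rho_t(g) := \rho_c(g)\exp(t\,\eta(g))$ of abelian representations, yielding a smooth three-dimensional family of abelian representations through $\rho_c$ in $\Hom(\pi_1(Y(K)), SU(2))$. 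A standard computation (via Poincar\'e--Lefschetz duality and the long exact sequence of the pair $(Y(K), \partial Y(K))$) gives $H^2(Y(K); \mathbb{R}) = 0$, so the Kuranishi deformation theory is unobstructed and $\Hom(\pi_1(Y(K)), SU(2))$ is locally smooth of dimension three at $\rho_c$. The three-dimensional abelian family above therefore exhausts the local structure, and after quotienting by $SU(2)$-conjugation a neighborhood of $\rho_c$ in $R(K)$ contains only abelian---hence reducible---representations.

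Finally, I conclude by compactness. If no neighborhood of the two vertical lines in the pillowcase disjoint from $i^*(R(K)^*)$ existed, there would be a sequence $\rho_n \in R(K)^*$ with $i^*(\rho_n)$ converging to a point on one of the lines. By compactness of $R(K)$, a subsequence converges to some $\rho \in R(K)$ with $i^*(\rho)$ on that line. The first step forces $\rho \in \{\rho_0, \rho_{\pm}\}$, while the second step provides a neighborhood of $\rho$ in $R(K)$ devoid of irreducibles, contradicting $\rho_n \to \rho$ with $\rho_n \in R(K)^*$. The principal obstacle is the second step---verifying that the local structure at the central representations is fully captured by the abelian deformations---which ultimately rests on the vanishing $H^2(Y(K); \mathbb{R}) = 0$ feeding into standard deformation theory for $SU(2)$-character varieties.
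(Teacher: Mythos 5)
Your proof is correct and follows the same overall three-step structure as the paper: compactness reduces the problem to understanding representations whose restriction lands on the two lines; the $SU(2)$-cyclic hypothesis forces these to be the central representations $\rho_0, \rho_\pm$; and a cohomological dimension count at the central points rules out nearby irreducibles. That said, you make two genuine refinements. For the case $\rho(m_K) = -I$, the paper argues by contradiction: if such a $\rho$ were irreducible it would induce an irreducible $SO(3)$-representation of $\pi_1(Y)$, which lifts to $SU(2)$ since $H^2(Y;\Z/2)=0$, violating $SU(2)$-cyclicity; then it handles the reducible case separately. Your twist $\tilde\rho(g) = (-I)^{\ell(g)}\rho(g)$ treats all such $\rho$ uniformly, reducing directly to the $\rho(m_K) = I$ case without any $SO(3)$ detour. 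In the local analysis, the paper computes $T_{\rho_\pm} R(K) \cong H^1(Y(K);\R^3) \cong \R^3$ and observes that the abelian deformations already supply three dimensions, but leaves implicit why this prevents additional irreducible branches; your explicit appeal to $H^2(Y(K);\R) = 0$ (hence $H^2(\pi_1(Y(K));\su(2)_{\text{ad}\,\rho_c}) = 0$, since group cohomology injects into the cohomology of the manifold in degree two) and the resulting unobstructed Kuranishi model shows that $\Hom(\pi_1(Y(K)),SU(2))$ is genuinely a smooth $3$-manifold near $\rho_c$, so the abelian family must exhaust a neighborhood. This makes the final step fully rigorous rather than merely plausible.
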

\begin{proof} Suppose by contradiction that the image of $R(K)^*$ intersects every neighborhood of the lines $\{\alpha = 0 \mod 2 \pi \mathbb{Z}\}$ and $\{\alpha = \pi \mod 2 \pi \mathbb{Z}\}$. If that was the case, then we could find a sequence of elements in $R(K)^*$ whose image under $i^*$ converges to a point on one of the two lines.  By the compactness of $R(K)$, the limit is the image of a representation $\rho \colon \pi_1(Y(K)) \to SU(2)$ sending every meridional curve $\mu$ to $\pm 1$.  We first claim that $\rho$ must be a central representation (and hence reducible), and so its image under $i^*$ can only be $(0,0)$ or $(\pi,0)$.  

First, if $\rho(\mu) = 1$, then $\rho\colon \pi_1(Y(K)) \to SU(2)$ is really a representation of $\pi_1(Y)$. Since $Y$ is assumed to be $SU(2)$-cyclic, then the representation is trivial and therefore $\rho(\lambda) = 1$. As a consequence, if the limit of elements in $R(K)^*$ is an element of the line $\{ \alpha = 0 \mod 2 \pi \mathbb{Z} \}$, then it is the point $(0,0)$ in the pillowcase.  Next, consider the case that $\rho(\mu) = -1$. If the representation $\rho$ is irreducible, then we obtain an irreducible representation $\widetilde{\rho}\colon \pi_1(Y)  \to SO(3)$. The obstruction to lifting an $SO(3)$ representation into an $SU(2)$-representation is an element of $H^2(Y;\Z/2)$, and since $Y$ is an integer homology three-sphere, the obstruction vanishes and $\widetilde{\rho}$ would lift to an irreducible representation to $SU(2)$, contradicting the fact that $Y$ is $SU(2)$-cyclic.  Therefore, a representation $\rho \colon \pi_1(Y(K)) \to SU(2)$ satisfying $\rho(\mu) = -1$ is reducible and hence abelian, and so factors through $H_1(Y(K))$.  Because $\lambda$ is trivial in $H_1(Y(K))$, we see that $\rho$ is the central representation sending $\mu$ to $-1$ and $\lambda$ to $1$, and this corresponds to the point $(-\pi,0)$ in the pillowcase.  All of this shows that if a sequence of elements in $i^*R(K)$ converges to a point on the lines $\{\alpha = 0 \mod 2 \pi \mathbb{Z}\}$ and $\{\alpha = \pi \mod 2 \pi \mathbb{Z}\}$, then the limit point is a central representation. For notation, we will call these representations $\rho_\pm$ for the sign of the image of $\mu$.  

Now, it remains to show that the points $(0,0)$ and $(\pi,0)$ cannot be limits of irreducible representations. We remark here that this fact does not require that $Y$ is $SU(2)$-cyclic. Let $\Gamma=\pi_1\left(Y(K)\right)$. A result of Weil \cite{Weil} expanded in \cite[Chapter 2]{lubotzky-magid} shows that $T_\rho R(K)$ corresponds to $H^1(\Gamma;\mathfrak{su}(2)_{\text{ad} \smallcirc \rho})$. This group is identified with the first cohomology group (with twisted coefficients) of a $K(\Gamma,1)$-space, or more generally, with the first (twisted) cohomology of any CW complex with fundamental group isomorphic to $\Gamma$. This shows that $H^1(\Gamma;\mathfrak{su}(2)_{{\text{ad} \smallcirc \rho}})=H^1(Y(K);\mathfrak{su}(2)_{\text{ad} \smallcirc \rho})$ and so $T_\rho R(K)=H^1(Y(K);\mathfrak{su}(2)_{\text{ad} \smallcirc \rho})$. Next, since each representation $\rho_\pm$ is central, then $ad\circ\rho_\pm$ is the trivial representation and so $$H^1\left(Y(K);\mathfrak{su}(2)_{\text{ad} \smallcirc \rho_\pm}\right)=H^1\left(Y(K);\R^3\right)\cong \R^3.$$ This shows that the tangent space to $R(K)$ at $\rho_\pm$ is three-dimensional. Since we obtain three dimensions of freedom by abelian representations near $\rho_\pm$ in $R(K)$, the entire tangent space to $R(K)$ consists of tangent vectors to abelian representations and so there cannot be irreducible representations near $\rho_\pm$, completing the proof. 
\end{proof}

\subsection{Essential curves in the pillowcase}
In this section, we relate the instanton Floer homology of 0-surgery on a knot to the image of the character variety of the knot exterior in the pillowcase.  This will be the key step in the proof of Theorem~\ref{thm:toroidal}, found at the end of this subsection.  
  
We next establish some notation, following Kronheimer-Mrowka in \cite{KM_Dehn}, that will be useful in the proof of our next theorem.  
\begin{definition}\label{def:subset pillowcase}
For a subset $L \subseteq R(T^2)$, we denote by $R(K |  L)$ the set of elements $[\rho] \in R(K)$ such that $[i^*\rho] \in L$. 
\end{definition}

\begin{theorem}\label{thm:pillowcase intersection}
Let $K$ be a knot in an integer homology three-sphere $Y$, and assume that the instanton Floer homology of the $0$-surgery is non-vanishing, $I^w_*(Y_0(K)) \neq 0$. Then any topologically embedded path from $P=(0,\pi)$ to $Q=(\pi,\pi)$ in the associated pillowcase has an intersection point with the image of $R(K)$. 
\end{theorem}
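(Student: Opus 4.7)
The plan is a proof by contradiction: supposing there exists an embedded path $\gamma$ from $P=(0,\pi)$ to $Q=(\pi,\pi)$ in the pillowcase which is disjoint from $i^*R(K)$, I would deduce that $I^w_*(Y_0(K))=0$, contradicting the hypothesis.

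The first step is to identify the generators of the unperturbed instanton Floer chain complex of $Y_0(K)$ in terms of the pillowcase. Writing $Y_0(K) = Y(K)\cup_{T^2}(S^1\times D^2)$ and unpacking the admissibility condition for the $SO(3)$-bundle (whose $w_2$ is detected by a Seifert surface of $K$), the flat connections on the admissible bundle correspond to representations $\rho\in R(K)$ with $\rho(l_K)=-1$, i.e., to $i^*R(K)\cap\{\beta=\pi\}$, which is precisely the image of $R(K)$ on the horizontal segment from $P$ to $Q$ in the fundamental domain. Because $l_K$ is a product of commutators in $\pi_1(Y(K))$, every such representation is automatically irreducible, and so lies away from the abelian locus $\{\beta=0\}$.

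The second step is to deform this reference segment to $\gamma$ via a holonomy perturbation, using the machinery reviewed in \Cref{sec:perturbations} and developed in \cite{KM_Dehn, Zentner}. One has a family of admissible perturbations $\pi_f$ supported in a collar of $T^2$ inside the filling solid torus; these leave the restriction of a $\pi_f$-perturbed flat connection to $Y(K)$ honestly flat, while twisting the gluing condition on the boundary torus. The upshot is a bijection between $\pi_f$-perturbed critical points on $Y_0(K)$ and intersections $i^*R(K)\cap C_f$, where $C_f$ is an embedded arc from $P$ to $Q$ in the pillowcase. The essential technical input is that as $f$ varies, the curves $C_f$ are $C^0$-dense among embedded paths from $P$ to $Q$. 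Granting this density, compactness of $R(K)$ produces an open neighborhood $U$ of $\gamma$ disjoint from $i^*R(K)$, and we choose $f$ with $C_f\subset U$; the $\pi_f$-perturbed critical set is then empty, so the perturbed chain complex vanishes identically.

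Invariance of instanton Floer homology under holonomy perturbations (discussed in \Cref{sec:perturbations}) then forces $I^w_*(Y_0(K))=0$, contradicting the hypothesis. The main obstacle, and the hard part of the argument, is the density claim: producing, for each topologically embedded path $\gamma$ from $P$ to $Q$, a holonomy perturbation whose pillowcase curve $C_f$ lies in a prescribed $C^0$-neighborhood of $\gamma$. Some care is also needed to ensure the perturbation does not spawn spurious reducible critical points; this is controlled by the irreducibility observation in the first paragraph together with \Cref{lem:avoids-lines}, which keeps the curves $C_f$ and the relevant part of $i^*R(K)$ away from the reducible locus near the vertical lines $\{\alpha=0\}$ and $\{\alpha=\pi\}$.
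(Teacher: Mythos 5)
Your proposal is correct and follows essentially the same route as the paper: assume a path $\gamma$ from $P$ to $Q$ misses $i^*R(K)$, use compactness of $R(K)$ to fatten $\gamma$ to an open neighborhood missing the image, invoke the holonomy-perturbation machinery (the paper's synthesis of Zentner's Theorem 4.2 and Proposition 5.3, packaged here as Theorem~\ref{thm: hol perturbations, missing curve}) to find a nearby curve $c'$ whose perturbed critical set $R^w_\Psi(Y_0(K))$ double-covers $R(K|c')$, conclude the perturbed complex is zero, and contradict non-vanishing via Theorem~\ref{thm:invariance of instanton Floer homology}. Two small imprecisions that do not affect the argument: the correspondence between perturbed critical points and $i^*R(K)\cap C_f$ is a double cover, not a bijection (see the parenthetical after Theorem~\ref{thm: hol perturbations, missing curve}); and your appeal to Lemma~\ref{lem:avoids-lines} to rule out reducibles is misplaced, since that lemma assumes $Y$ is $SU(2)$-cyclic (a hypothesis absent from Theorem~\ref{thm:pillowcase intersection}) and concerns the vertical lines $\{\alpha=0,\pi\}$ rather than the horizontal reducible locus $\{\beta=0\}$ -- your own observation that $\rho(l_K)$ a nontrivial commutator product forces irreducibility, combined with choosing $c'$ to avoid $\{\beta=0\}$, is already what keeps reducibles out of the picture.
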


Before proving the theorem, we point out that this generalizes \cite[Theorem 7.1]{Zentner}, from knots in $S^3$ to knots in general integer homology three-spheres. The main difference in the argument compared to \cite[Theorem 7.1]{Zentner} is that here we make use of the non-trivial instanton Floer homology of the 0-surgery in an essential way, which is exploited through its connection with holonomy perturbations of the Chern-Simons functional.  The arguments of the third author in \cite{Zentner} instead use holonomy perturbations of a moduli space which computes the Donaldson invariants of a closed 4-manifold containing the 0-surgery as a hypersurface. In that case, the non-vanishing result builds on the existence of a taut foliation on $S^3_0(K)$ for a non-trivial knot $K$. In the case at hand, we do not know whether $Y_0(K)$, the $0$-surgery on a knot $K$ in the integer homology three-sphere $Y$, admits a taut foliation.  

\begin{proof}

Suppose by contradiction that there is a continuous embedded path $c$ from $P$ to $Q$ such that its image is disjoint from $i^*(R(K)) \subseteq R(T^2)$. (We will not distinguish between paths and their image for the remainder of this proof.)  In other words, $R(K |  c)$ is empty.  In particular, we may suppose that $c$ is disjoint from the bottom line $\{\beta = 0\}$ of the pillowcase $R(T^2)$, since any element of this line lies in the image of $i^*$. Since the image $i^*(R(K))$ is compact, there is a neighborhood $U \subseteq R(T^2)$ of the image of $c$ in $R(T^2)$ which is still disjoint from $i^*(R(K))$.  Since $R(K |  c)$ is empty, for $c'$ sufficiently close to $c$, $R(K |  c')$ is empty as well.  

Associated to a three-manifold and admissible bundle, we consider two objects: the Chern-Simons functional and holonomy perturbations of the Chern-Simons functional. These are described in detail in \Cref{sec:perturbations}, in particular Sections \ref{sec:hol perturbations} and \ref{sec:holonomy perturbations and shearing maps}, but their definition is not needed for the proof.  Given a three-manifold $Z$ with admissible bundle represented by $w$ and a holonomy perturbation $\Psi$, let $R^w_\Psi(Z)$ denote the set of critical points of the Chern-Simons functional perturbed by $\Psi$.  By Theorem \ref{thm: hol perturbations, missing curve} below (which is essentially a synthesis of \cite[Theorem 4.2 and Proposition 5.3]{Zentner}), there exists a path $c'$ arbitrarily close to $c$ and a (holonomy) perturbation $\Psi$ of the Chern-Simons functional such that $R^w_\Psi(Y_0(K))$ is a double cover of $R(K |  c')$. Therefore, $R^w_\Psi(Y_0(K))$ is empty, so computing Morse homology with respect to this perturbation of the Chern-Simons functional produces a trivial group.  However, Theorem \ref{thm:invariance of instanton Floer homology} below asserts that computing Morse homology with respect to the particular perturbation $\Psi$ produces a group isomorphic to $I^w_*(Y_0(K))$, which is non-zero by assumption. Therefore, we obtain a contradiction.  
\end{proof}

\begin{remark} Although \cite[Proposition 5.3]{Zentner} is only stated for knots in $S^3$, the arguments used in its proof apply for a knot in an arbitrary $SU(2)$-cyclic integer homology three-sphere. 
\end{remark}

If we combine the constraint that $Y$ is $SU(2)$-cyclic with the assumption that $I^w_*(Y_0(K))$ is non-trivial, then we obtain the following generalization of \cite[Theorem 7.1]{Zentner}, which will be the last step before the proof of our main theorem.

\begin{theorem}(Pillowcase alternative) \label{pillowcase alternative}
	Suppose $Y$ is an $SU(2)$-cyclic integer homology three-sphere. Suppose $K$ is a knot in $Y$ such that the 0-surgery $Y_0(K)$ has non-trivial instanton Floer homology $I^w_*(Y_0(K))$, where $w$ is the non-zero class in $H^2(Y_0(K);\Z/2) \cong \Z/2$. Then the image $i^*(R(Y(K)))$ in the cut-open pillowcase $C = [0,\pi] \times (\R/2 \pi \Z)$ contains a topologically embedded curve which is homologically non-trivial in $H_1(C;\Z) \cong \Z$.
\end{theorem}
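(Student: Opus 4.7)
The plan is to argue by contradiction, combining Theorem \ref{thm:pillowcase intersection} with an Alexander-duality argument for compact subsets of the annulus, and using Lemma \ref{lem:avoids-lines} to control $i^*(R(K))$ near the boundary circles of $C$.

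First I would verify that $P, Q \notin i^*(R(K))$: any representation restricting to $P = (0, \pi)$ satisfies $\rho(m_K) = \id$, hence factors through $\pi_1(Y)$, hence is trivial by $SU(2)$-cyclicity, which forces $\rho(l_K) = \id \neq -\id$; a symmetric argument handles $Q$. Then by Lemma \ref{lem:avoids-lines} the irreducible part of $i^*(R(K))$ stays uniformly away from the lines $\{\alpha = 0 \bmod 2\pi\Z\}$ and $\{\alpha = \pi \bmod 2\pi\Z\}$, while the abelian part of $R(K)$ is confined to $\{\beta = 0 \bmod 2\pi\Z\}$; consequently $i^*(R(K))$ meets each boundary circle of $C$ in a single isolated point, namely $(0,0)$ and $(\pi, 0)$ respectively.

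Setting $A := i^*(R(K))$, a compact semi-algebraic subset of $C$, and assuming for contradiction that $A$ contains no topologically embedded closed curve homologically non-trivial in $H_1(C; \Z) \cong \Z$, I would invoke the Alexander-duality argument in the annulus $C$ (essentially \cite[Lemma 7.3]{Zentner}, which generalizes directly to this setting): under the assumption $A$ does not separate the two boundary circles of $C$, and so there is a topologically embedded arc in $C \setminus A$ joining them. Since $A$ meets each boundary in only one point, I can slide the endpoints of this arc along the boundaries—steering clear of $(0,0)$ and $(\pi, 0)$—to reach $P$ and $Q$. The quotient map $C \to R(T^2)$ is injective on the interior of $C$ and folds each boundary circle via $\beta \mapsto -\beta$; by arranging the arc to remain on one side of the fixed points $\beta \in \{0, \pi\}$ of this involution on each boundary, its image in the pillowcase is an embedded path from $P$ to $Q$ disjoint from $i^*(R(K))$, contradicting Theorem \ref{thm:pillowcase intersection}.

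The main obstacle I expect is the Alexander-duality step: passing from the set-theoretic statement that $A$ does not separate to the existence of an actual topologically embedded essential curve contained in $A$ relies on the semi-algebraic (or otherwise sufficiently tame) structure of $i^*(R(K))$. The $SU(2)$-cyclic hypothesis enters essentially in this argument, since it reduces $A \cap \partial C$ to isolated points; without it, $A$ could meet a boundary circle in arcs, which would obstruct the endpoint-sliding step and invalidate the construction of the forbidden path in the pillowcase.
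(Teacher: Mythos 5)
Your argument is correct and is essentially the same proof as the paper's: the paper also reduces to observing that the $SU(2)$-cyclic hypothesis (via Lemma~\ref{lem:avoids-lines}) keeps $i^*(R(Y(K)))$ off the lines $\alpha = 0, \pi$ away from $\beta = 0$, and then combines Theorem~\ref{thm:pillowcase intersection} with the Alexander-duality argument of \cite[Lemma 7.3]{Zentner}. The extra detail you supply about the endpoint-sliding and the behavior near $(0,0)$ and $(\pi,0)$ is exactly the content the paper delegates to the cited lemma.
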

\begin{proof}
	The hypothesis implies that the lines $\{ (0,\beta) \in R(T^2) \, | \, \beta \neq 0 \}$ and $\{ (\pi,\beta) \in R(T^2) \, | \, \beta \neq 0 \}$ have empty intersection with $i^*(R(Y(K)))$. The conclusion then follows from Theorem \ref{thm:pillowcase intersection} together with the Alexander duality argument of \cite[Lemma 7.3]{Zentner}. 
\end{proof}

\begin{figure}[h!]
\def\svgwidth{0.3\textwidth}
\begingroup%
  \makeatletter%
  \providecommand\color[2][]{%
    \errmessage{(Inkscape) Color is used for the text in Inkscape, but the package 'color.sty' is not loaded}%
    \renewcommand\color[2][]{}%
  }%
  \providecommand\transparent[1]{%
    \errmessage{(Inkscape) Transparency is used (non-zero) for the text in Inkscape, but the package 'transparent.sty' is not loaded}%
    \renewcommand\transparent[1]{}%
  }%
  \providecommand\rotatebox[2]{#2}%
  \newcommand*\fsize{\dimexpr\f@size pt\relax}%
  \newcommand*\lineheight[1]{\fontsize{\fsize}{#1\fsize}\selectfont}%
  \ifx\svgwidth\undefined%
    \setlength{\unitlength}{91.87500572bp}%
    \ifx\svgscale\undefined%
      \relax%
    \else%
      \setlength{\unitlength}{\unitlength * \real{\svgscale}}%
    \fi%
  \else%
    \setlength{\unitlength}{\svgwidth}%
  \fi%
  \global\let\svgwidth\undefined%
  \global\let\svgscale\undefined%
  \makeatother%
  \begin{picture}(1,1.1176087)%
    \lineheight{1}%
    \setlength\tabcolsep{0pt}%
    \put(0,0){\includegraphics[width=\unitlength,page=1]{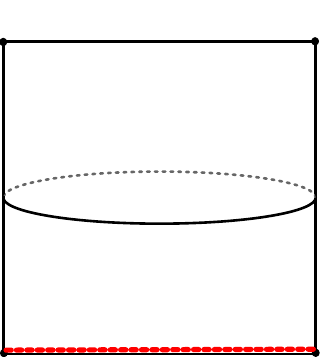}}%
    \put(0.49138201,0.10144818){\color[rgb]{1,0,0}\makebox(0,0)[lt]{\lineheight{0}\smash{\begin{tabular}[t]{l}$i^*R(K)$\end{tabular}}}}%
    \put(0,0){\includegraphics[width=\unitlength,page=2]{pillowcasesadcase.pdf}}%
    \put(1.00306116,1.0170985){\color[rgb]{0,0,0}\makebox(0,0)[lt]{\lineheight{1.25}\smash{\begin{tabular}[t]{l}$Q$\end{tabular}}}}%
    \put(-0.00376276,1.01505768){\color[rgb]{0,0,0}\makebox(0,0)[lt]{\lineheight{1.25}\smash{\begin{tabular}[t]{l}$P$\end{tabular}}}}%
  \end{picture}%
\endgroup%

\caption{This is a hypothetical image of a representation variety $i^*(R(K))$ of a knot $K$ in an integer homology three-sphere $Y$. The homology three-sphere $Y$ is assumed to satisfy $I^w_*(Y_0(K))\neq 0$ and assumed to not be $SU(2)$-cyclic. As a consequence, $i^*(R(K))$ intersects every path joining $P$ and $Q$ as in \Cref{thm:pillowcase intersection}, but it does not contain a curve which is homologically non-trivial in the cut-open pillowcase $C = [0,\pi] \times (\R/2 \pi \Z)$. This hypothetical example thus illustrates that the $SU(2)$-cyclic assumption is necessary in \Cref{pillowcase alternative}.}
\label{figure:sad pillowcase}
\end{figure}

\subsection{Main Result}\label{main}
In this subsection we prove that if an integer homology three-sphere contains an embedded incompressible torus, then the fundamental group of the homology three-sphere admits irreducible $SU(2)$-representations. To derive our result we first recall that we can realize a toroidal integer homology three-sphere as a splice, as in \cite[Proof of Corollary 6.2]{Eftekhary}.  We then study the image of the two knot exteriors in the pillowcase of the incompressible torus. With this in mind, we include the following definition.

\begin{definition} 
Let $K_1\subset Y_1$ and $K_2\subset Y_2$ be oriented knots in oriented integer homology three-spheres. For $i=1,2$, denote by $\mu_i,\lambda_i\subset \partial N(K_i)$ a meridian and longitude for $K_i$ in $Y_i$. Form a three-manifold $Y$ as $$\left(Y_1\setminus N(K_1)^\circ\right)\underset{h}{\cup}\left(Y_2\setminus N(K_2)^\circ\right),$$ where $h\colon \partial N(K_1)\to \partial N(K_2)$ identifies $\mu_1$ with $\lambda_2$, and $\lambda_1$ with $\mu_2$. The manifold $Y$ is called the {\em splice} of $Y_1$ and $Y_2$ along knots $Y_2$ and $K_2$. 
\end{definition}

Let $Y$ be an integer homology three-sphere and let $T$ be a two-dimensional torus embedded in $Y$ in such manner that its normal bundle is trivial. A simple application of the Mayer-Vietoris sequence shows that $Y\setminus N(T)^\circ$ has two connected components $M_1,M_2$, and that each component has the same homology groups as $S^1$. The ``half lives, half dies'' principle shows that for each $i=1,2$ there exists a basis $(\alpha_i,\beta_i)$ for the peripheral subgroup of $\partial M_i$ such that $\beta_i$ is nullhomologous in $M_i$. Therefore, if $Y_i$ denotes the union of $M_i$ and a solid torus $S^1\times D^2$ in such a way that the curve $\{1\}\times\partial D^2$ gets identified with $\alpha_i$, then $Y_i$ is an integer homology three-sphere. Moreover, since $T$ is incompressible in $Y$, then the core of the solid torus in $Y_i$ is a non-trivial knot $K_i$. In other words, every toroidal integer homology three-sphere can be expressed as a splice of non-trivial knots $K_1$ and $K_2$ in integer homology three-spheres $Y_1$ and $Y_2$. 

With all of this in place, we are ready to prove our main result. 

\begin{proof}[Proof of \Cref{thm:toroidal}] Realize $Y$ as a splice $\left(Y_1\setminus N(K_1)^\circ\right)\underset{h}{\cup}\left(Y_2\setminus N(K_2)^\circ\right)$, with $K_1,K_2$ non-trivial knots. Suppose first that $Y_i\setminus N(K_i)^\circ$ is reducible, in other words, that $Y_i\setminus N(K_i)^\circ=Q_i\# \left(Z_i\setminus N(J_i)^\circ\right)$ where $Q_i,Z_i$ are integer homology three-spheres and $J_i\subset Z_i$ has irreducible and boundary-incompressible exterior. As a consequence of Van-Kampen's theorem, there exists a surjection $\pi_1\left(Y_i\setminus N(K_i)^\circ\right)\to \pi_1\left(Z_i\setminus N(J_i)^\circ\right)$, and this surjection induces a $\pi_1$-surjection from $Y$ to the splice of $(Z_1,J_1)$ and $(Z_2,J_2)$. Thus, our proof 
reduces to the case when $Y$ is the splice of two knots with irreducible and boundary-incompressible exteriors, which we assume from now on.

Next, by the Seifert--van Kampen theorem, the pieces of the decomposition fit into the following commutative diagram 
$$\xymatrix@R=10pt{
&\pi_1\left(Y_1\setminus N(K_1)^\circ\right)\ar[dr]&\\
\pi_1(T)\ar[ur]\ar[dr]&&\pi_1(Y)\\
&\pi_1\left(Y_2\setminus N(K_2)^\circ\right)\ar[ur]&
}$$ 
and since each $Y_i\setminus N(K_i)^\circ$ is a homology circle, there exists a $\pi_1$-surjection from $Y$ to each $Y_i$. Therefore, our proof reduces further to the case when both $Y_1$ and $Y_2$ are $SU(2)$-cyclic since an irreducible representation for $Y_i$ gives rise to one for $Y$. 

To recap, the previous two paragraphs allow us to assume that $Y$ is the splice of $(Y_1,K_1)$, $(Y_2,K_2)$ with each $Y_i$ an $SU(2)$-cyclic homology three-sphere, and each $K_i\subset Y_i$ a knot with irreducible and boundary-incompressible exterior. Then, as a consequence of \Cref{homologysu2cyclic} we have that each $Y_i$ has trivial instanton Floer homology. Moreover, since each $Y_i\setminus N(K_i)^\circ$ is irreducible and boundary-incompressible, \Cref{instanton-0-surgery} shows that the instanton Floer homology of $0$-surgery on $Y_i$ along $K_i$ is non-zero. Therefore, the hypotheses of both \Cref{pillowcase alternative} and \Cref{lem:avoids-lines} hold, and the proof now follows exactly as in \cite[Proof of Theorem 8.3(i)]{Zentner} with \cite[Theorem 7.1]{Zentner} and \cite[Proposition 8.1(ii)]{Zentner} replaced by \Cref{pillowcase alternative} and \Cref{lem:avoids-lines} respectively.  
\end{proof}

\begin{figure}[h!]
\centering
\def\svgwidth{0.3\textwidth}
\begingroup%
  \makeatletter%
  \providecommand\color[2][]{%
    \errmessage{(Inkscape) Color is used for the text in Inkscape, but the package 'color.sty' is not loaded}%
    \renewcommand\color[2][]{}%
  }%
  \providecommand\transparent[1]{%
    \errmessage{(Inkscape) Transparency is used (non-zero) for the text in Inkscape, but the package 'transparent.sty' is not loaded}%
    \renewcommand\transparent[1]{}%
  }%
  \providecommand\rotatebox[2]{#2}%
  \newcommand*\fsize{\dimexpr\f@size pt\relax}%
  \newcommand*\lineheight[1]{\fontsize{\fsize}{#1\fsize}\selectfont}%
  \ifx\svgwidth\undefined%
    \setlength{\unitlength}{115.33747101bp}%
    \ifx\svgscale\undefined%
      \relax%
    \else%
      \setlength{\unitlength}{\unitlength * \real{\svgscale}}%
    \fi%
  \else%
    \setlength{\unitlength}{\svgwidth}%
  \fi%
  \global\let\svgwidth\undefined%
  \global\let\svgscale\undefined%
  \makeatother%
  \begin{picture}(1,1.63216688)%
    \lineheight{1}%
    \setlength\tabcolsep{0pt}%
    \put(0.60278513,1.03438679){\color[rgb]{0,0,0}\makebox(0,0)[lt]{\lineheight{0}\smash{\begin{tabular}[t]{l} \end{tabular}}}}%
    \put(0,0){\includegraphics[width=\unitlength,page=1]{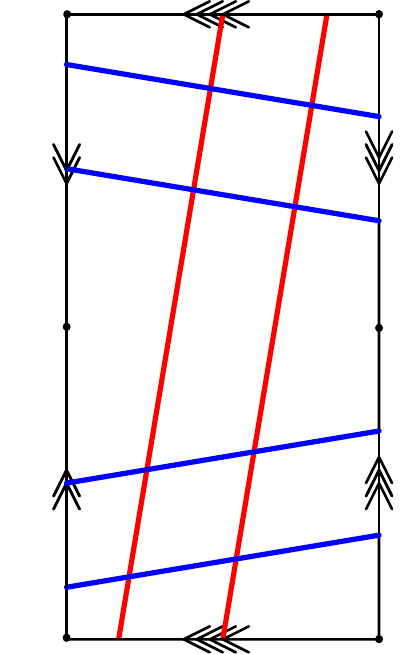}}%
    \put(0.03774589,0.80910789){\color[rgb]{0,0,0}\makebox(0,0)[lt]{\lineheight{0}\smash{\begin{tabular}[t]{l}$P$\end{tabular}}}}%
    \put(1.00731549,0.81073355){\color[rgb]{0,0,0}\makebox(0,0)[lt]{\lineheight{0}\smash{\begin{tabular}[t]{l}$Q$\end{tabular}}}}%
  \end{picture}%
\endgroup%

\caption{Let $Y$ be the three-manifold obtained as the splice of two copies of the exterior of a right handed trefoil, and let $T$ be the incompressible torus given as the intersection of the two knot exteriors. The figure shows the image of each copy of $R(T_{2,3})^*$ in the pillowcase. Note that any representation of the splice corresponding to an intersection of the red and blue curves is irreducible.}
\label{figure:splice_trefoils}
\end{figure}


\section{Review of instanton Floer homology and holonomy perturbations}\label{sec:perturbations}

We start this section with a disclaimer: We do not claim to prove any original or new result in this section. However, we review instanton Floer homology and holonomy perturbations to the extent which is necessary in order to understand the proof of our main results above. For instance, Section \ref{sec:holonomy perturbations and shearing maps} below contains a synthesis of the third author's results about holonomy perturbations from \cite{Zentner} which we hope the reader unfamiliar with this reference will appreciate. Section \ref{sec: invariance} below contains a result about invariance under holonomy perturbations in the context of an admissible bundle with non-trivial second Stiefel-Whitney class, together with a sketch of proof. Again, this result is already contained in \cite{FloerOriginal} and \cite{Donaldson}, but by looking up these references it may not be immediately clear whether these results apply verbatim in our situation. 

The proof of Theorem \ref{thm:pillowcase intersection} relies on a non-vanishing result of an instanton Floer homology group $I^w_{*,\Phi}(Y_0(K))$, computed with suitable perturbation terms $\Phi$ of the Chern-Simons function. We will review the construction of these perturbation terms below, which are built from the holonomy along families of circles, parametrized by embedded surfaces. The critical points of the complex underlying the homology group $I^w_{*,\Phi}(Y_0(K))$ will have a clear interpretation in terms of intersections of the representation variety $R(K)$ with certain deformations of the path given by the straight line $\{ \beta = \pi \}$ in the pillowcase,  resulting as the representation variety of the boundary of the exterior of $K$ in $Y$ as before. 

On the other hand, ~\Cref{instanton-0-surgery} yields a non-vanishing result for $I^w_*(Y_0(K))$, defined in the usual way, and in particular without the above class of perturbation terms.  
We can therefore complete the proof from the fact that the two instanton Floer homology groups, $I^w_*(Y_0(K))$ and $I^w_{*,\Phi}(Y_0(K))$, are isomorphic, and we sketch the proof of this below. 
\begin{remark}
	 In the construction of both $I^w_*(Y_0(K))$ and $I^w_{*,\Phi}(Y_0(K))$ there are typically perturbation terms involved for the sake of transversality. These can be chosen as small as one likes, in a suitable sense. We will omit these auxiliary perturbations from our notation. The perturbations labeled by the terms $\Phi$, however, will have a clear geometric purpose, and the discussion below will focus on these. 
\end{remark}

\subsection{The Chern-Simons function}
For details on the holonomy perturbations we use we refer the reader to Donaldson's book \cite{Donaldson}, Floer's orginal article \cite{floer}, and the third author's article \cite{Zentner}. 

If we deal with an admissible $SO(3)$-bundle $F \to Y$ over a three-manifold $Y$ with second Stiefel-Whitney class $w$, we may suppose that it arises from an $U(2)$-bundle $E \to Y$ as its adjoint bundle $\su(E)$, see for instance \cite[Section 5.6]{Donaldson}. Then $w = w_2(E) \equiv c_1(E) \text{ mod } 2$. The space of $SO(3)$-connections on $F$ is then naturally isomorphic to the space of $U(2)$-connections on $E$ that induce a fixed connection $\theta$ in the determinant line bundle $\det(E)$, which we will suppress from notation. 

When dealing with functoriality properties, it is more accurate to consider $w$ to be an embedded 1-manifold which is Poincar\'e dual to $w_2(E) = w_2(F)$, see \cite{KM_Knot_homology_groups_from_instantons}. 

We will fix a reference connection $A_0$ on $E$ and consider the Chern-Simons function
\begin{equation*}
\begin{split}
	\operatorname{CS}\colon \mathscr{A} & \to \R \\
				A & \mapsto \int_Y \tr (2 a \wedge (F_{A_0})_0 + a \wedge d_{A_0} a + \frac{1}{3} a \wedge [a \wedge a]) \, , 
\end{split}
\end{equation*}
 	defined on the affine space $\mathscr{A}$ of connections $A$ in $E$ which induce $\theta$ in $\det(E)$, and where we have written $A = A_0 + a$ with $a \in \Omega^1(Y;\mathfrak{su}(E))$. The term $F_A$ denotes the curvature of a connection $A$, and $(F_A)_0$ denotes its trace-free part, and $d_{A}$ denotes the exterior derivative associated to a connection $A$.	
 	We denote by $\mathscr{G}$ the group of bundle automorphisms of $E$ which have determinant $1$. The Chern-Simons function induces a circle-valued function $\overline{\operatorname{CS}}\colon \mathscr{B} \to \R/\Z$ on the space $\mathscr{B} = \mathscr{A}/\mathscr{G}$ of connections modulo gauge equivalence, and 
 	the instanton Floer homology $I^w_*(Y)$ is the Morse homology, in a suitable sense, of the Chern-Simons function $\overline{\operatorname{CS}}$. To carry this out, one has to deal with a suitable grading on the critical points, which will only be a relative $\Z/8$-grading, with suitable compactness arguments (Uhlenbeck compactification and ``energy running down the ends''), and with transversality arguments.  In particular, one will in general add a convenient perturbation term to the Chern-Simons function to obtain the required transversality results. This is usually done by the use of holonomy perturbations that we discuss below. By a Sard-Smale type condition, this term can be chosen as small as one wants, in the respective topologies one is working with.  Therefore, we are suppressing these perturbations for the sake of transversality from our notation. One then needs to prove independence of the various choices involved, and in particular the Riemannian metric and the perturbation terms required for transversality.

 	One may also deal with orientations, but we do not need this in our situation, where $\Z/2$-coefficients in the Floer homology will be sufficient.

 \subsection{Review of holonomy perturbations}\label{sec:hol perturbations}
 	To set up the perturbation of the Chern-Simons function we are using, we need to introduce some notation. 
 	Let $\chi\colon SU(2) \to \R$ be a class function, that is, a smooth conjugation invariant function. Any element in $SU(2)$ is conjugate to a diagonal element, and hence there is a $2 \pi$-periodic even function $g\colon \R \to \R$ such that
\begin{equation}\label{eq:class function}
\chi \left(\begin{bmatrix} e^{it} & 0 \\ 0 & e^{-it} \end{bmatrix} \right) = g(t) \, 
\end{equation}
for all $t \in \R$. Furthermore, let $\Sigma$ be a compact surface with boundary, and let $\mu$ be a real-valued two-form which has compact support in the interior of $\Sigma$ and with $\int_\Sigma \mu = 1$. Let $\iota\colon \Sigma \times S^1 \to Y $ be an embedding. Let $N \subseteq Y$ be a codimension-zero submanifold containing the image of $\iota$, and such that the bundle $E$ is trivialized over $N$ in such a way that the connection $\theta$ in $\det(E)$ induces the trivial product connection in the determinant line bundle of our trivialization of $E$ over $N$. This means that connections in $\mathscr{A}$ can be understood as $SU(2)$-connections in $E$ when restricted to $N$. 
\\

Associated to this data, we can define a function 
\[\Phi\colon \mathscr{A} \to \R\] 
which is invariant under the action of the gauge group $\mathscr{G}$. For $z \in \Sigma$, we denote by $\iota_z\colon S^1 \to Y$ the circle $t \mapsto \iota(z,t)$. A connection $A \in \mathscr{A}$ provides an $SU(2)$-connection over the image of $\iota$. The holonomy $\operatorname{Hol}_{\iota_z}(A)$ of $A$ around the loop $\iota_z$ (with variable starting point) is a section of the bundle of automorphisms of $E$ with determinant $1$ over the loop. Since $\chi$ is a class function, $\chi(\operatorname{Hol}_{\iota_z}(A))$ is well-defined. We can therefore define
\begin{equation} \label{holonomy perturbation term}
	\Phi(A) = \int_{\Sigma} \chi(\operatorname{Hol}_{\iota_z}(A)) \, \mu(z) \, ,
\end{equation}
and this function is invariant under the action of the gauge group $\mathscr{G}$. It depends on the data $(\iota,\chi,\mu)$ and a trivialization of the bundle over a codimension-zero submanifold $N$, but we will omit the latter from notation. 
\\

We will have to work with a finite sequence of such embeddings, all supported in a submanifold $N$ of codimension zero over which the bundle $E \to N$ is trivial. For some $n \in \N$, let 
$
	\iota_k \colon S^1 \times \Sigma_k \to N \subseteq	Y
$
 be a sequence of embeddings for $k=0, \dots, n-1$ such that the interior of the image of $\iota_k$ is disjoint from the interior of the image of $\iota_l$ for $k \neq l$.  We also suppose class functions $\chi_k\colon SU(2) \to \R$ corresponding to even, $2 \pi$-periodic  functions $g_k\colon \R \to \R$ as above to be chosen, for $k = 0, \dots, n-1$, and we assume that $\mu_k$ is a two-form on $\Sigma_k$ with support in the interior of $\Sigma_k$ and integral $1$. Just as in the case of (\ref{holonomy perturbation term}), this data determines a finite sequence of functions 
\[
\Phi_k\colon \mathscr{A} \to \R \, , \hspace{1cm} k = 0, \dots, n-1\, , 
\]	
 	and we are interested in the Morse homology of the function 
 \begin{equation}\label{eq:Chern-Simons perturbed}
 	\overline{\operatorname{CS}} + \Psi \colon \mathscr{B} \to \R/\Z, \quad \text{where} \ \Psi = \sum_{k=0}^{n-1} \Phi_k.
 \end{equation}
 
\begin{definition}\label{def:perturbed CS critical}
	We denote by $R^w_\Psi(Y)$ the space of critical points $[A] \in \A/\G$ of the function $\overline{\operatorname{CS}} + \Psi \colon \mathscr{B} \to \R/\Z$, where $\Psi$ is specified by the holonomy perturbation data $\{\iota_k,\chi_k\}$ as above. 
\end{definition}

If the holonomy perturbation data $\Psi$ is chosen in a way such that $R^w_\Psi(Y)$ does not contain equivalence classes of connections $[A]$ such that $A$ is reducible, then the construction for defining a Floer homology $I^w_\Psi(Y)$ with generators given by critical points of the perturbed Chern-Simons function $\overline{\operatorname{CS}} + \Psi$, and with differentials defined from negative gradient flow lines, goes through in the same way as in \cite{Donaldson,floer}. This will require additional small perturbations in order to make the critical points non-degenerate and in order to obtain transversality for the moduli spaces of flow-lines.  
In fact, we really have not done anything new compared to the constructions in these references since the same perturbations already appear there for the sake of obtaining transversality of the moduli spaces involved in the construction. The only slight difference is that in Floer's work, the surfaces $\Sigma_k$ appearing in the definition of the embeddings $\iota_k$ are always chosen to be disks, whereas those used in the proof of Theorem \ref{thm:pillowcase intersection} above, i.e. in \cite[Theorem 4.2 and Proposition 5.3]{Zentner}, the surfaces $\Sigma_k$ are all annuli. 

More specifically, for completeness, we recall a bit more on the implementation of the holonomy perturbations used in the work of the third author as needed in the previous section for studying $Y_0(K)$.  Given a smoothly embedded path $c$ from $P = (0,\pi)$ to $Q = (\pi,\pi)$ avoiding $(0,0)$ and $(\pi,0)$,  There is an isotopy $\phi_t$ through area-preserving maps of the pillowcase $R(T^2)$ such that $\phi_1$ maps the straight line $c_0:=\{\beta = \pi\}$ from $P$ to $Q$ to the path $c$, and such that $\phi_t$ fixes the four corner points of the pillowcase. Theorem 3.3. of \cite{Zentner} states that isotopies through area preserving maps can be $C^0$-approximated by isotopies through finitely many {\em shearing} maps. For details on shearing maps we refer the reader to \cite[Sections 2 and 3]{Zentner}. The essential relationship is outlined in the following subsection, which we include for the sake of clarity and completeness of our exposition.  

\subsection{Review of holonomy perturbations and shearing maps} \label{sec:holonomy perturbations and shearing maps}
We denote by $R(N)$ the space of flat $SU(2)$-connections in the trivial $SU(2)$-bundle over $N= S^1 \times \Sigma$  up to gauge equivalence, where  $\Sigma = S^1 \times I = S^1 \times [0,1]$ is an annulus. The two inclusion maps $i_-\colon S^1 \times (S^1 \times \{0\}) \to N$ and $i_+\colon S^1 \times (S^1 \times \{1\}) \to N$ induce restriction maps $r_-, r_+ \colon R(N) \to R(T^2)$ to the representation varieties of the two boundary tori, which are pillowcases. In this situation, we have that both $r_-$ and $r_+$ are homeomorphisms, and under the natural identification of these tori we have $r_- = r_+$. 

Now if $\chi$ is a class function as in Equation (\ref{eq:class function}) above then instead of the flatness equation $F_A = 0$ for connections $A$ on the trivial bundle over $N$, one may consider the equation
\begin{equation}\label{eq: perturbed flat}
	F_A = \chi'(\Hol_l(A)) \mu,	
\end{equation}
where $l = S^1 \times {pt}$ denote ``longitudes'' in $N$, where $\Hol_l(A)$ is the holonomy of $A$ along longitudes parametrised by points in $\Sigma$, and where  $\chi'\colon SU(2) \to \su(2)$ is the trace dual of the derivative $d\chi$ of $\chi$, and where $\mu$ is a 2-form with compact support in the interior of $\Sigma$ and $\int_\Sigma \mu = 1$. It can then be proved that $\Hol_l(A)$ does not depend on the choice of longitude, and that solutions $A$ of this equation are reducible, see \cite[Lemma 4]{Braam-Donaldson}, and also \cite[Proposition 2.1]{Zentner}. 

If we denote by $R_\chi(N)$ the solutions of Equation (\ref{eq: perturbed flat}) up to gauge equivalence, then we still have two restriction maps $r_\pm \colon R_\chi(N) \to R(T^2)$. However, in this situation we have the following relationship:
\begin{proposition}
	\label{prop: shearing}
The two restriction maps $r_\pm$ are homeomorphisms and fit into a commutative diagram
\begin{equation}\label{eq: comm diagram shearing}
\begin{split}
\xymatrix{ &R_\chi(N)\ar[dl]_{r_-}\ar[dr]^{r_+}&\\ R(T^2)\ar[rr]^{\phi_\chi} &&R(T^2),} 
\end{split}
\end{equation} 
where $\phi$ is a shearing map that relates to $\chi$ as follows: 

If we write $m_- = \{ pt \} \times S^1 \times \{0\}$ and $m_+ = \{ pt \} \times S^1 \times \{1\}$ for ``meridians'' given by the boundaries of $\Sigma$ in $\{pt \} \times \Sigma$, and if 
\begin{equation}\label{eq: holonomies}
\begin{split}
	\operatorname{Hol}_{m_\pm}(A)  = \begin{bmatrix} e^{i \beta_\pm} & 0 \\ 0 & e^{-i \beta_\pm} \end{bmatrix}, \hspace{0,2cm} \text{and} \hspace{0,2cm} 
	\operatorname{Hol}_{l}(A)  = \begin{bmatrix} e^{i \alpha} & 0 \\ 0 & e^{-i \alpha} \end{bmatrix},
\end{split}
\end{equation}
which we may suppose up to gauge equivalence, then we have 
\begin{equation}\label{eq: shearing}
\begin{split}
	\phi_\chi \left( \begin{array}{c}  \alpha \\ \beta_- \end{array} \right) & = 
		\left( \begin{matrix}  \alpha    \\ \beta_- + f(\alpha)  \end{matrix} \right),  
\end{split} 
\end{equation}
where $f\colon \R \to \R$ is the derivative of the function $g$ appearing in Equation (\ref{eq:class function}). 
Here, $(\alpha,\beta_\pm)$ determine points in $R(T^2)$ determined by $\Hol_{m_\pm}(A)$ and $\Hol_{l}(A)$ as in Equation (\ref{eq: holonomies}) above. 
\end{proposition}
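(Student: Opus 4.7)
The plan is to exploit the reducibility of all solutions $A$ to the perturbed flatness equation (\ref{eq: perturbed flat}), already cited in the excerpt, and reduce the claim to an explicit linear computation on $\Sigma$. After a gauge transformation one may assume that $A$ has diagonal form $A = a\cdot \mathrm{diag}(i,-i)$ for some real $1$-form $a\in\Omega^1(N;\R)$. Since $\chi$ is a class function, its trace-dual derivative $\chi'$ sends a diagonal element $\mathrm{diag}(e^{i\alpha},e^{-i\alpha})$ to a scalar multiple of $\mathrm{diag}(i,-i)$, with the scalar equal up to normalization to $g'(\alpha) = f(\alpha)$. Both sides of (\ref{eq: perturbed flat}) thus lie in a single one-dimensional Cartan subalgebra, and the equation reduces (after fixing conventions) to the scalar PDE $da = f(\alpha)\,\mu$ on $N$, where $\alpha$ denotes the angle of the longitudinal holonomy.

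Next, I would introduce coordinates $(s,t,u)$ on $N = S^1 \times S^1 \times [0,1]$ with $s$ the longitudinal coordinate and $(t,u)\in\Sigma$ the meridional and interval coordinates, writing $a = a_s\,ds + a_t\,dt + a_u\,du$. Since $\mu$ has no $ds$ component, the $ds\wedge dt$ and $ds\wedge du$ components of $da = f(\alpha)\,\mu$ vanish, and a further $U(1)$-gauge adjustment lets us take $a_s$ independent of $(t,u)$ and $a_t,a_u$ independent of $s$. The longitudinal condition becomes $\alpha = \int_0^{2\pi}a_s\,ds$, and the remaining content of the equation is the transverse PDE $d_\Sigma\gamma = f(\alpha)\,\mu$ on $\Sigma$, where $\gamma := a_t\,dt + a_u\,du$. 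Since $H^2(\Sigma;\R)=0$, a primitive $\gamma$ exists and is unique modulo closed $1$-forms on $\Sigma$; the residual $U(1)$-gauge action translates $\int_{m_-}\gamma$ by integer multiples of $2\pi$. Hence for any prescribed $(\alpha,\beta_-)\in R(T^2)$ there is a unique gauge equivalence class in $R_\chi(N)$ with $r_-([A]) = (\alpha,\beta_-)$, depending continuously on the data. This shows $r_-$ is a homeomorphism, and the argument for $r_+$ is symmetric.

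To identify $\phi_\chi = r_+\circ r_-^{-1}$ with the shearing formula (\ref{eq: shearing}), I would invoke Stokes' theorem on the annulus $\Sigma$, whose oriented boundary is $m_+ - m_-$:
\begin{equation*}
\beta_+ - \beta_- \;=\; \int_{m_+}\gamma \;-\; \int_{m_-}\gamma \;=\; \int_{\Sigma} d_\Sigma\gamma \;=\; f(\alpha)\int_{\Sigma}\mu \;=\; f(\alpha),
\end{equation*}
where the last equality uses that $\mu$ has total mass $1$ and compact support in the interior of $\Sigma$. The longitudinal coordinate $\alpha$ is preserved by $\phi_\chi$ since $a_s$ is $(t,u)$-independent. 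This produces exactly the shearing formula displayed in (\ref{eq: shearing}) and establishes commutativity of the diagram (\ref{eq: comm diagram shearing}).

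The main obstacle is less conceptual than one of bookkeeping: one has to reconcile the normalization conventions for the trace-dual operation $d\chi\mapsto\chi'$, for the diagonal embedding $U(1)\hookrightarrow SU(2)$, and for the orientations of $m_\pm$ and of $\Sigma$, so that the scalar appearing on the right of the reduced PDE is precisely $f(\alpha)=g'(\alpha)$ rather than some nonzero multiple of it, and so that the sign in the application of Stokes matches the statement. A minor secondary point is the continuity of $r_\pm^{-1}$, which follows from the fact that solutions of $d_\Sigma\gamma = c\,\mu$ with prescribed periods may be produced depending smoothly on $c$ via the Poincar\'e lemma with parameters.
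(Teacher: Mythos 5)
Your proposal is correct, and the approach — reduce to the abelian case using the known reducibility of solutions, pass to a diagonal $U(1)$-connection in temporal-type gauge, and then read off the shear via Stokes' theorem on the annulus — is the standard computation. The paper itself offers no proof of this proposition; it defers entirely to the cited references (Braam–Donaldson, Lemma 4 and Zentner, Proposition 2.1), which carry out essentially this argument, so there is no paper proof to compare against beyond noting that your computation agrees with what those sources do.

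One small point worth tightening: you invoke the gauge adjustment making $a_s$ constant in $(t,u)$ before establishing that $\alpha = \int_{S^1} a_s\,ds$ is already $(t,u)$-independent. The constancy of $\alpha$ should be deduced first, directly from the vanishing of the $ds\wedge dt$ and $ds\wedge du$ components of $da$ (these force $\partial_t\alpha = \int\partial_s a_t\,ds = 0$ and similarly for $u$, by periodicity of $a_t, a_u$ in $s$), after which the gauge transformation by $\theta(s,t,u) = \int_0^s\bigl(a_s(s',t,u)-\tfrac{\alpha}{2\pi}\bigr)\,ds'$ is well-defined on $N$ and simultaneously makes $a_s$ constant and $a_t, a_u$ independent of $s$. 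Your flagged concern about normalization conventions (the factor coming from the trace-dual and the Cartan embedding) is real but harmless: it is exactly what the cited references fix, and does not affect the structure of the argument.
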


Equation (\ref{eq: comm diagram shearing}) is essentially proved in \cite[Lemma 4]{Braam-Donaldson}, and a proof also appears in \cite[Proposition 2.1]{Zentner}. 

Of course, one can iterate this construction: One may choose a finite collection of disjoint embeddings $\iota_k\colon S^1 \times \Sigma$ into a closed three-manifold $Y$, and class functions $\chi_k$. The embeddings may chosen to be ``parallel'' in that the image of $\iota_k$ corresponds to $ S^1 \times (S^1 \times [k,k+1]) \subseteq  S^1 \times (S^1 \times [0,n]) \subseteq Y$, but the role of ``meridian'' and ``longitude'' may be chosen arbitrarily in an $SL_2(\Z)$ worth of possible choices. In this case the restriction maps to the two boundary components of $S^1 \times (S^1 \times [0,n])$ in the diagram analogous to Equation (\ref{eq: comm diagram shearing}) will be related by a {\em composition} of shearing maps.

\subsection{Holonomy perturbations and the pillowcase}

  The main application of holonomy perturbations we have in mind is stated as Theorem \ref{thm: hol perturbations, missing curve} below. To put it into context, note first that for a non-trivial bundle, the critical space of the Chern-Simons function $R^w(Y_0(K))$ is a double cover of $R(K|c_0)$, where $c_0$ is the straight line from $(0,\pi)$ to $(\pi,\pi)$ in the pillowcase, see \cite[Proposition 5.1]{Zentner}. If we choose holonomy perturbations associated to some data $\{ \iota_k, \chi_k\}_{k=0}^{n-1}$ as above, where the image of $\iota_k$ corresponds to $S^1 \times (S^1 \times [k,k+1]) \subseteq  S^1 \times (S^1 \times [0,n]) \subseteq Y$ in a collar neighborhood of the Dehn filling torus in $Y_0(K)$, then repeated use of \Cref{prop: shearing} above will imply that for the holonomy perturbation $\Psi$ determined by the data $\{ \iota_k, \chi_k\}_{k=0}^{n-1}$, the critical space of $R^w_\Psi(Y_0(K))$ will correspond to $R(K|c')$, where $c'$ is the image of $c_0$ under a composition of shearing maps $\phi_{n-1} \circ \dots \circ \phi_0$, with ``directions'' determined by the embeddings $\iota_k$. 
 	(In Equation (\ref{eq: shearing}) we are dealing with a shearing in direction $\begin{pmatrix} 0 \\ 1 \end{pmatrix}$, but we can pick any direction in $\Z^2$.)
  
  	The main point of \cite[Theorem 4.2]{Zentner} is that the area-preserving maps of the pillowcase obtained by composition of shearing maps is $C^0$-dense in the space of all area-preserving maps of the pillowcase, and this yields the following result. 
  
	
\begin{theorem}[Theorem 4.2 and Proposition 5.3, \cite{Zentner}]\label{thm: hol perturbations, missing curve}
Let $K$ be a knot in an $SU(2)$-cyclic integer homology three-sphere $Y$.  Let $c$ be an embedded path from $(0,\pi)$ to $(\pi,\pi)$ missing the other orbifold points of the pillowcase.  Then, there exists an arbitrarily close path $c'$ and a holonomy perturbation $\Psi$ along disjoint embeddings of $S^1 \times (S^1 \times I)$ parallel to the boundary of a neighborhood of $K$ such that $R^w_\Psi(Y_0(K))$ double-covers $R(K | c')$.  
\end{theorem}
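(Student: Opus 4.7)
The plan is to exploit the dictionary provided by \Cref{prop: shearing}, which translates holonomy perturbations supported on parallel tori into shearing maps of the pillowcase, together with a density statement for compositions of shearings inside the space of area-preserving self-homeomorphisms of $R(T^2)$. Proposition~5.1 of \cite{Zentner} already shows that the unperturbed critical space $R^w(Y_0(K))$ double-covers $R(K \mid c_0)$, where $c_0 = \{\beta = \pi\}$, so it will suffice to build a perturbation $\Psi$ whose geometric effect on the boundary pillowcase is to carry $c_0$ to a path $c'$ arbitrarily $C^0$-close to the prescribed path $c$, while preserving the double-cover feature.

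First, I would construct an area-preserving homeomorphism $\phi$ of the pillowcase fixing all four orbifold points and sending $c_0$ to $c$. Since $c$ shares both endpoints with $c_0$ and avoids the remaining two corners by hypothesis, a Moser-type argument applied to the symplectic orbifold $R(T^2)$ produces an isotopy $\phi_t$ of area-preserving homeomorphisms with $\phi_0 = \id$ and $\phi_1(c_0) = c$, all fixing the corners. Area preservation is essential because only such maps can arise from the shearing construction.

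Second, I would appeal to the density result of \cite[Theorem 3.3]{Zentner}, which asserts that compositions of shearing maps are $C^0$-dense in the space of area-preserving self-homeomorphisms of the pillowcase fixing the corners. For any tolerance $\varepsilon > 0$, this yields a composition $\Phi = \phi_{\chi_{n-1}} \circ \cdots \circ \phi_{\chi_0}$ of shearings with $\|\Phi - \phi\|_{C^0} < \varepsilon$, so that $c' := \Phi(c_0)$ lies within $\varepsilon$ of $c$. To realize $\Phi$ by an actual holonomy perturbation, I take a product collar of the Dehn-filling torus in $Y_0(K)$, identify it with $S^1 \times (S^1 \times [0,n])$, partition it into $n$ disjoint slabs $S^1 \times (S^1 \times [k, k+1])$, and let $\iota_k$ be the inclusion of the $k$-th slab, with the meridian/longitude roles on that slab rotated by the $SL_2(\Z)$ element specifying the direction of the $k$-th shearing factor. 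I then pick class functions $\chi_k$ whose derivatives reproduce the shearing rates appearing in Equation~\eqref{eq: shearing}. The resulting sum $\Psi = \sum_{k=0}^{n-1} \Phi_k$ is a holonomy perturbation of the required form.

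Finally, iterated application of \Cref{prop: shearing} on each slab, combined with the unchanged flat equation on the complement of the collar inside $Y_0(K)$, identifies $R^w_\Psi(Y_0(K))$ with the preimage under the restriction map $i^* \colon R(K) \to R(T^2)$ of the path $c'$, equipped with the same two-fold covering structure that appeared in the unperturbed case. The main technical obstacle is bookkeeping at the interfaces between adjacent slabs, ensuring that the matching conditions on flat-with-perturbation connections in consecutive pieces force the restriction to the Dehn-filling torus to land precisely on $\Phi(c_0)$, and that the two-fold ambiguity stemming from the non-trivial bundle $w$ is preserved by the iterated shearing. The Moser step and the $C^0$-density of shearing compositions are standard by comparison; the real content is thus the verification that the cumulative effect of $\Psi$ on the pillowcase is exactly $\Phi$ and that the double-cover structure carries through unchanged.
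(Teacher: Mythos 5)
Your proposal matches the paper's argument essentially step for step: the unperturbed identification from \cite[Proposition 5.1]{Zentner}, a Moser-type isotopy fixing the corners and carrying $c_0=\{\beta=\pi\}$ to $c$, the $C^0$-density of compositions of shearings from \cite[Theorem 3.3]{Zentner}, and realization of those shearings by holonomy perturbations on parallel slabs via \Cref{prop: shearing}. The paper itself treats this theorem as a synthesis of \cite[Theorem 4.2 and Proposition 5.3]{Zentner} and gives essentially the same outline you do, so your proof is correct and takes the same route.
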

(To see that we get a double-cover here we refer the reader to \cite[Remark 1.2]{Zentner}).

We only stress the fact that we must assume there are no reducible connections in $R^w_\Psi(Y)$, since the presence of such solutions will result in a failure of the transversality arguments involved in the discussion.

%
%
%

\subsection{Invariance of instanton Floer homology}\label{sec: invariance}
	The instanton Floer homology groups $I^w(Y)$ and $I^w_\Psi(Y)$, the latter being defined under the additional assumption that $R^w_\Psi(Y)$ does not contain reducible connections, depend on additional data that we have already suppressed from notation, notably the choice of a Riemannian metric on $Y$ and holonomy perturbations just as defined above in order to achieve transversality. More explicitly, holonomy perturbations have already been implicit in the definition of instanton Floer homology unless the critical points of $\operatorname{CS}$ had been non-degenerate at the start and the moduli space defining the flow lines had been cut out transversally.  In Floer's original work \cite{floer}, and elaborated in more detail in Donaldson's book \cite{Donaldson}, invariance under the choice of Riemannian metric and the choice of holonomy perturbations follows from a more general concept, namely the functoriality of instanton Floer homology under cobordisms. See also the discussion in \cite[Section 3.8.]{KM_Knot_homology_groups_from_instantons}
	

	
	\begin{theorem}[Invariance under holonomy perturbations]\label{thm:invariance of instanton Floer homology}
			Suppose that the space of critical points $R^w_\Psi(Y)$ of the perturbed Chern-Simons function \ref{eq:Chern-Simons perturbed} appearing in Definition \ref{def:perturbed CS critical} above does not contain equivalence classes of reducible connections.  
		Then the associated instanton Floer homology groups $I^w_*(Y)$ and $I^w_{*,\Psi}(Y)$ are isomorphic. 
	\end{theorem}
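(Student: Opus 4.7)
The plan is to use the standard continuation-map argument of Floer theory, in the spirit of the invariance proofs in \cite{FloerOriginal} and \cite{Donaldson}, adapted to the setting of an admissible bundle. First I would choose a smooth one-parameter family of holonomy perturbations $\{\Psi_s\}_{s \in \R}$ with $\Psi_s = 0$ for $s \leq 0$ and $\Psi_s = \Psi$ for $s \geq 1$, depending smoothly on $s$ in between. On the cylinder $\R \times Y$ this family defines an $s$-dependent perturbed anti-self-duality equation for connections on the pullback bundle, whose finite-energy solutions are precisely the trajectories of the downward gradient flow of the time-dependent perturbed Chern-Simons function. Counting the zero-dimensional components of the resulting moduli space of trajectories from a critical point $[A_-] \in R^w(Y)$ to a critical point $[A_+] \in R^w_\Psi(Y)$ modulo two yields a chain map $\Phi$ between the Floer chain complexes computing $I^w_*(Y)$ and $I^w_{*,\Psi}(Y)$.

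Next I would show that $\Phi$ is a quasi-isomorphism by the usual reversal argument. The reversed interpolation produces a chain map $\Phi'$ in the opposite direction, and to see that $\Phi' \circ \Phi$ is chain homotopic to the identity I would consider a two-parameter family of perturbation data on $\R \times Y$ interpolating between the concatenation of the two one-parameter paths, whose induced chain map is $\Phi' \circ \Phi$, and the constant path $\Psi_s \equiv 0$, whose induced chain map is the identity. The boundary of the resulting parameterized moduli space then yields the standard chain homotopy identity $\Phi' \circ \Phi - \mathrm{id} = \partial H + H \partial$. The symmetric argument gives $\Phi \circ \Phi' \simeq \mathrm{id}$, so $\Phi$ descends to the desired isomorphism $I^w_*(Y) \cong I^w_{*,\Psi}(Y)$.

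The main technical obstacle is ensuring that no reducible connections appear as critical points or as ends of trajectories in any of the parameterized moduli spaces involved, since reducibles would simultaneously break transversality and introduce spurious boundary components of the one-dimensional moduli spaces, invalidating the chain map and chain homotopy identities. At the endpoints $s \leq 0$ and $s \geq 1$ the absence of reducibles follows respectively from admissibility of the bundle (so that no flat reducible connection on $F \to Y$ exists) and from the standing hypothesis that $R^w_\Psi(Y)$ contains no reducibles. For intermediate values of $s$ the locus in the Banach space of admissible holonomy perturbations for which the perturbed critical set acquires a reducible connection has positive codimension, so a generic small deformation of the interpolating family $\{\Psi_s\}$ avoids reducibles entirely; the same Sard--Smale argument simultaneously arranges transversality of both the one-parameter and two-parameter moduli spaces used above.

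The remaining analytic ingredients, namely Uhlenbeck compactness on $\R \times Y$ and the bubbling analysis that ensures the compactified moduli spaces have the expected stratified structure, are unaffected by the perturbation terms $\Psi_s$ because these enter the equation as compactly supported, bounded, gauge-equivariant zeroth-order terms; the arguments of \cite{Donaldson} therefore transfer without essential modification. Working with $\Z/2$ coefficients as in the rest of the paper avoids any additional bookkeeping for orientations, and the relative $\Z/8$-grading is preserved by the continuation map by a standard spectral-flow computation along the interpolating path.
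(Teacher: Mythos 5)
Your proposal follows essentially the same continuation-map argument as the paper's sketch: interpolate the holonomy perturbation along the cylinder $\R\times Y$, count zero-dimensional trajectory moduli spaces to define a chain map, and establish quasi-isomorphism via the reversal/concatenation/neck-stretching picture, with Uhlenbeck compactness and $\Z/2$ coefficients handled as in \cite{Donaldson}. This matches the paper in all essential respects.

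One remark in your handling of reducibles is off-target and worth flagging. You invoke a Sard--Smale argument to arrange that the perturbed critical sets $R^w_{\Psi_s}(Y)$ contain no reducibles for \emph{intermediate} values of $s$. The continuation-map construction never appeals to these intermediate critical sets, and such a genericity statement is both unnecessary and dubious (the endpoints $\Psi_0=0$, $\Psi_1=\Psi$ are fixed, and a parametrized transversality result would have to be formulated and justified with care). The correct and cleaner observation, which the paper makes, is that a \emph{reducible trajectory} on $\R\times Y$ would have reducible limits at $t\to\pm\infty$; since those limits lie in $R^w(Y)$ and $R^w_\Psi(Y)$, which contain no reducibles by admissibility and by hypothesis respectively, no trajectory (nor any broken trajectory appearing in the Uhlenbeck compactification) can be reducible. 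Your earlier sentence noting that reducibles must be excluded ``as ends of trajectories'' gestures at this, but the subsequent Sard--Smale paragraph should be replaced by the simple limiting argument above.
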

	
\begin{proof}[Sketch of Proof]
The proof of this statement is standard, so we 
will describe a chain map determining the isomorphism on homology and outline the ideas along which the result is proved.  
	
	Slightly more generally, suppose we are dealing with a smooth map $[0,1] \to C^\infty(\mathscr{A},\R)$, $s \mapsto\Gamma(s)$. We may suppose that this map is constant near $0$ and $1$. The Floer differential counts flow lines of the Chern-Simons function, possibly suitably perturbed. Instead of doing this, we may also consider the downward gradient flow equation of the {\em time-dependent} function $\operatorname{CS} + \Gamma(s)$, where we extend $\Gamma(s)$ to a map $(-\infty,\infty) \to  C^\infty(\mathscr{A},\R)$ which is constant $\Gamma(0)$ on $(-\infty,0]$ and constant $\Gamma(1)$ on $[1,\infty)$.  If we are given critical points $\rho_0$ of $\operatorname{CS} + \Gamma(0)$ and $\rho_1$ of $\operatorname{CS} + \Gamma(1)$ of the same index, then we consider a zero-dimensional moduli space $M_{\rho_0,\rho_1}$ of connections ${\bf{A}} = \{A(t)\}_t$ on $E \to \R \times Y$ of finite $L^2$-norm (inducing $\theta$ on $\det(E)$, pulled back to $\R \times Y$), such that the equation 
	\begin{equation}\label{eq:perturbed_grad_flow}
		\frac{d A}{dt} = - \operatorname{grad}(	\operatorname{CS} + \Gamma(t))(A(t))
	\end{equation}
		holds on $\R \times Y$, where $\operatorname{grad}$ denots the $L^2$-gradient, and $\bf{A}$ limits to $\rho_0$ and $\rho_1$ in the limit $t \to \pm \infty$, respectively.  Finally, we also require that the moduli space $M_{\rho_0,\rho_1}$ is cut out transversally.
	
	We require that the addition of the term $-\operatorname{grad}(\Gamma(t))(A(t))$ to the gradient flow \Cref{eq:perturbed_grad_flow} for the Chern-Simons function does not alter the linearized deformation theory for $\bf{A}$, see for instance \cite[Sections 3 and 4]{Donaldson}. Furthermore, we have to require that the Uhlenbeck compactification goes through with the perturbation we have in mind.  It is shown in \cite[Section 5.5]{Donaldson} that both hold for the function $\Gamma$ built from holonomy perturbations as described in \Cref{eq:Gamma}.  One essential feature is that the holonomy perturbation term appearing in the flow equation is uniformly bounded. 
	
	 A suitable interpolation between the holonomy perturbation data $\Gamma(0) = 0 $ and $\Gamma(1) = \Psi$ for $\Psi$ as in Equation (\ref{eq:Chern-Simons perturbed}) is given, for instance, by the following formula. Suppose $\Psi$ is determined by data $\{\iota_k,\chi_k\}_{k=0}^{n-1}$. Then for $t \in [\frac{k}{n},\frac{k+1}{n}]$ we define
\begin{equation}\label{eq:Gamma}
			\Gamma(t) = \sum_{l=0}^{k-1} \Phi_l + \beta(t-k/n) \Phi_k			
	\end{equation}
		for any $k \in \{0,\dots, n-1\}$. Here $\beta\colon[0,\frac{1}{n}] \to [0,1]$ is a smooth function which is $0$ in a neighborhood of $0$ and $1$ in a neighborhood of $\frac{1}{n}$.  
		
		Now the moduli space $M_{\rho_0,\rho_1}$ does not contain any reducibles, because if it did, then the limits $\rho_0$ and $\rho_1$ in $R^w(Y)$ and $R^w_\Psi(Y)$, respectively, would also be reducible, and by our assumption and the setup for instanton Floer homology for admissible bundles, this does not occur. 
		
		One defines a linear map $\zeta\colon C^w(Y) \to C^w_\Psi(Y)$ of the underlying chain complexes such that the ``matrix entry'' corresponding to the elements $\rho_0 \in C^w(Y)$ and $\rho_1 \in C^w_\Psi(Y)$ is given by the signed count of the moduli space $M_{\rho_0,\rho_1}$, where the sign is determined in the usual way by the choice of a {\em homology orientation}. That $\zeta$ is a chain map follows from analyzing the compactification of suitable $1$-dimensional moduli spaces, making use of Uhlenbeck compactification -- no bubbling can occur here due to the dimension of the moduli space -- and the chain convergence discussed in \cite[Section 5.1]{Donaldson}, together with suitable glueing results. 
		
		That different interpolations yield chain homotopic chain maps follows from studying the compactification of $(-1)$-dimensional moduli spaces over a 1-dimensional family, defining a chain homotopy equivalence between the two different interpolations. 
		
		That $\zeta$ defines a chain homotopy equivalence follows from the functoriality property: One may consider a further path $\Gamma'\colon [1,2] \to  C^\infty(\mathscr{A},\R)$ such that $\Gamma'(1) = \Gamma(1)$, similar as above. This defines a corresponding chain map $\zeta'\colon C^w_{\Psi} \to C^w_{\Gamma'(2)}$. On the other hand, one may concatenate the path $\Gamma(t)$ and the path $\Gamma'(t)$ and build a corresponding interpolation $\Gamma''\colon [0,2] \to C^\infty(\mathscr{A},\R)$, resulting in a chain map $\zeta''$ as above. A neck stretching argument then shows that $\zeta''$ and $\zeta' \circ \zeta$ are chain homotopy equivalent, and hence induce the same maps on homology. In our situation we take $\Gamma'(2)$ to be $0$, meaning that this defines again the ``unperturbed'' chain complex $C^w(Y)$ (which, again, may contain some perturbations for the sake of regularity omitted in our notation). One may finally interpolate between $\Gamma''$ and the $0$-term along a 1-dimensional family. Analyzing again the compactification of suitable $(-1)$-dimensional moduli spaces over a 1-dimensional family, one obtains a chain homotopy equivalence between $\zeta''$ and the identity. 		
\end{proof}
	
\begin{remark}
	There is some confusion about invariance under ``small'' and ``large'' holonomy perturbations in the field. If one is given holonomy perturbation data for which the underlying space of critical points and moduli spaces defining the differentials are already cut out transversally, then for small enough perturbations the same will still hold, and the resulting chain complexes will be isomorphic. This is due to the fact that the condition of being cut out transversally is an {\em open} condition, expressed as the surjectivity of the deformation operators involved in the linearized equation together with the Coulomb gauge fixing. 
	
	If on the other hand, one is given a situation where the critical points and the unperturbed moduli spaces are not cut out transversally, then one needs to perturb, and even if these perturbations are chosen ``small'', the resulting chain complexes will in general not be isomorphic but only chain homotopy equivalent. In this situation, the proof of invariance is really the same as proving the invariance under ``large'' perturbations, and already present in \cite{floer} and \cite{Donaldson}. 
\end{remark}


%


\section{Branched covers of prime satellite knots}\label{sec:other}
In this section, we prove \Cref{cor:branched-cover}, establishing the existence of a non-trivial $SU(2)$ representation for cyclic branched covers of prime satellite knots. We begin with a definition of satellite knots.

\begin{definition} \label{def:satellite}
Let $P\subset S^1\times D^2$ be an oriented knot in the solid torus. Consider an orientation-preserving embedding $h \colon S^1\times D^2\to S^3$ whose image is a tubular neighborhood of a knot $K$ so that  $S^1\times \{*\in \partial D^2\}$ is mapped to the canonical longitude of $K$. The knot $h(P)$ is called the {\em satellite knot with pattern $P$} and {\em companion $K$}, and is  denoted $P(K)$. The {\em winding number} of the satellite is defined to be the algebraic intersection number of $P$ with  $\{*\}\times D^2$.  See \Cref{fig:satellite} for an example. \end{definition}

\begin{figure}
    \centering
    \def\svgwidth{0.2\textwidth}
    \input{21cablefigure}\hspace{1cm}
     \def\svgwidth{0.3\textwidth}
\begingroup%
  \makeatletter%
  \providecommand\color[2][]{%
    \errmessage{(Inkscape) Color is used for the text in Inkscape, but the package 'color.sty' is not loaded}%
    \renewcommand\color[2][]{}%
  }%
  \providecommand\transparent[1]{%
    \errmessage{(Inkscape) Transparency is used (non-zero) for the text in Inkscape, but the package 'transparent.sty' is not loaded}%
    \renewcommand\transparent[1]{}%
  }%
  \providecommand\rotatebox[2]{#2}%
  \newcommand*\fsize{\dimexpr\f@size pt\relax}%
  \newcommand*\lineheight[1]{\fontsize{\fsize}{#1\fsize}\selectfont}%
  \ifx\svgwidth\undefined%
    \setlength{\unitlength}{241.50773621bp}%
    \ifx\svgscale\undefined%
      \relax%
    \else%
      \setlength{\unitlength}{\unitlength * \real{\svgscale}}%
    \fi%
  \else%
    \setlength{\unitlength}{\svgwidth}%
  \fi%
  \global\let\svgwidth\undefined%
  \global\let\svgscale\undefined%
  \makeatother%
  \begin{picture}(1,0.85564738)%
    \lineheight{1}%
    \setlength\tabcolsep{0pt}%
    \put(0,0){\includegraphics[width=\unitlength,page=1]{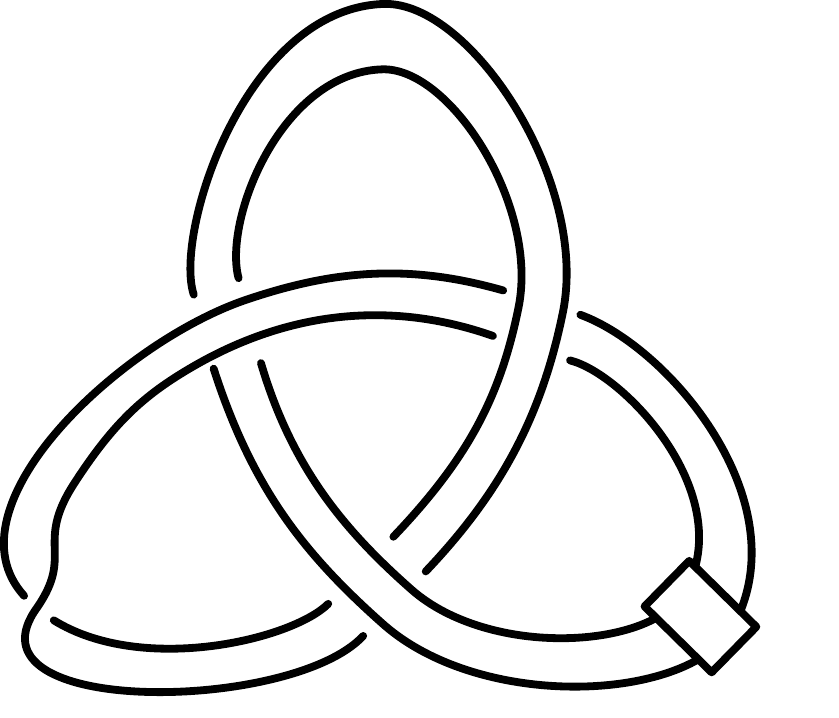}}%
    \put(0.78457429,0.12925047){\color[rgb]{0,0,0}\rotatebox{-46.174991}{\makebox(0,0)[lt]{\lineheight{1.25}\smash{\begin{tabular}[t]{l}-3\end{tabular}}}}}%
  \end{picture}%
\endgroup%

    \caption{Left: The pattern representing the (2,1)-cable satellite operation. Right: The (2,1)-cable for the right handed trefoil. The extra twisting appears as a consequence of the requirement that a longitude in $S^1\times D^2$ maps to the canonical longitude of the trefoil.}\label{fig:satellite}
\end{figure}

\begin{repcorollary}{cor:branched-cover} Let $K$ be a prime, satellite knot in $S^3$ and let $\Sigma(K)$ be any non-trivial cyclic cover of $S^3$ branched over $K$. Then $\pi_1\left(\Sigma(K)\right)$ admits a non-trivial $SU(2)$ representation. 
\end{repcorollary}

\begin{proof}
Let $K$ be a prime satellite knot in $S^3$. If $\Sigma(K)$ is not an integer homology three-sphere, then there is a non-trivial abelian representation. In the case when $\Sigma(K)$ is an integer homology three-sphere, then by \Cref{thm:toroidal} it suffices to show that $\Sigma(K)$ is toroidal. Write $K = P(J)$ and observe that if $\Sigma(K)$ is the $d$-fold cover of $S^3$ branched over $P(J)$, then there is a decomposition of $\Sigma\left(P(J)\right)$ as the union of $\Sigma(S^1\times D^2, P)$, the $d$-fold cover of $S^1\times D^2$ branched over $P$, and a $d$-fold covering space of the knot complement $S^3\setminus N(J)$. The isomorphism type of this latter covering space depends only on the greatest common divisor between $d$ and the winding number of $S^1\times D^2$, see for example \cite{seifert} or \cite[pg. 220]{livingston-melvin}. Since the exterior of $J$ has incompressible boundary, the same is true of any cover.  Therefore, we just need to show that $\Sigma(D^2 \times S^1, P)$ has incompressible boundary.  We claim the following.  Let $P$ be a non-trivial pattern knot in $D^2 \times S^1$ which does not correspond to a connect-sum and which is not contained in an embedded $B^3$.  Then for any cyclic branched cover over $P$, $\Sigma(D^2 \times S^1, P)$ has incompressible boundary.   This claim is standard and proved in the lemma below for completeness. 
\end{proof}

\begin{lemma}
Let $P$ be a non-trivial pattern knot in $D^2 \times S^1$ which does not correspond to a connect-sum and which is not contained in an embedded $B^3$.  Then for any cyclic branched cover over $P$, $M = \Sigma(D^2 \times S^1, P)$ has incompressible boundary.  
\end{lemma}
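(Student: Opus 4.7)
The plan is to argue by contradiction. Suppose some component of $\partial M$ admits a compressing disk, and let $\widetilde{M} := M \setminus \nu(\widetilde{P})^\circ$ denote the unbranched $\Z/d$-cyclic cover of the exterior $X := (D^2 \times S^1) \setminus \nu(P)^\circ$, with covering map $\pi\colon M \to D^2 \times S^1$. First I would observe that the hypotheses force every boundary component of $X$ to be incompressible in $X$: by the loop theorem, $\partial(D^2 \times S^1)$ compresses in $X$ exactly when $P$ lies in an embedded $B^3$, and $\partial\nu(P)$ compresses exactly when $P$ bounds a disk in $D^2 \times S^1$ (which in turn implies that $P$ lies in a ball); both scenarios are excluded. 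Since covers preserve $\pi_1$-injectivity of incompressible surfaces, $\partial \widetilde{M}$ is therefore incompressible in $\widetilde{M}$.

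The next step is to invoke the Meeks--Yau equivariant loop theorem for the deck group $G = \Z/d$ acting on $M$ and fixing the branch locus $\widetilde{P}$ pointwise. This produces a compressing disk $D \subset M$ for $\partial M$ whose $G$-orbit consists of disks that are pairwise either equal or disjoint. Let $H \le G$ be the stabilizer of $D$. If $H$ is trivial then any intersection $p \in D \cap \widetilde{P}$ would lie in $g \cdot D$ for every $g \in G$ (since $G$ fixes $\widetilde{P}$ pointwise), contradicting disjointness; hence $D \subset \widetilde{M}$, contradicting the preceding paragraph. Otherwise $H$ is a non-trivial cyclic group acting orientation-preservingly on the disk $D$, and by Ker\'ekj\'art\'o's classical theorem the action is conjugate to a rotation with a unique fixed point. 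Since every point of $D \cap \widetilde{P}$ must be fixed by $H$, the disk $D$ meets $\widetilde{P}$ in exactly one point.

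The image $\Delta := \pi(D) \subset D^2 \times S^1$ is then an embedded disk meeting $P$ transversely in exactly one point. Its boundary $\partial \Delta$ is a simple closed curve on $\partial(D^2 \times S^1)$, and it is essential, for otherwise the lift $\partial D$ would be null-homotopic in $\partial M$ (covers of tori are $\pi_1$-injective), contradicting that $D$ is a compressing disk. Because $\partial \Delta$ bounds a disk in $D^2 \times S^1$ it must be a meridian, and the algebraic intersection $\Delta \cdot P$ then computes the winding number of $P$, forcing $|w| = 1$.

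Finally, I would close the argument by invoking the classical observation that any non-trivial pattern of winding number $\pm 1$ in $D^2 \times S^1$ is a non-trivial connect-sum of the core with a knot in $S^3$: cutting $D^2 \times S^1$ along a meridional disk meeting $P$ in a single point exhibits $P$ as a $1$-$1$ tangle in $B^3$, and capping off yields a knot that is non-trivial precisely when $P$ is non-trivial. This contradicts the hypothesis that $P$ is not a connect-sum. The main obstacle I expect is the equivariant step: without the Meeks--Yau theorem, a generic compressing disk could meet the branch locus in many uncontrolled points, preventing a clean descent to $D^2 \times S^1$; controlling this intersection to a single point is the crucial maneuver.
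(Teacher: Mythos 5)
Your proof is correct and follows the same core approach as the paper: apply the Meeks--Yau equivariant theorem to obtain a compressing disk $D$ whose $G$-translates are pairwise disjoint or equal, argue that $D$ meets the branch locus $\widetilde{P}$ in at most one point, and derive a contradiction with the hypotheses via the image $\pi(D)$ in $D^2 \times S^1$. The sub-steps differ somewhat. To handle the case $D \cap \widetilde{P} = \emptyset$ (your trivial-stabilizer case), you first establish incompressibility of both components of $\partial X$ in the pattern exterior and lift this to the unbranched cover; the paper instead descends $D$ directly to a meridional disk disjoint from $P$, exhibiting $P \subset B^3$ and bypassing the unbranched cover entirely, which is shorter. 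To bound $|D \cap \widetilde{P}| \le 1$ you invoke Ker\'ekj\'art\'o's theorem that a nontrivial finite cyclic action on a disk is conjugate to a rotation, with a unique interior fixed point, whereas the paper uses the Riemann--Hurwitz observation that a disk cannot be a nontrivial cyclic branched cover of a disk over more than one point --- two different packagings of essentially the same Euler-characteristic fact. Finally, you spell out the passage to a connect-sum pattern via the $1$-$1$ tangle decomposition obtained by cutting along $\Delta$, which the paper asserts without elaboration. These are differences of implementation rather than of structure; your version is a bit longer but equally valid.
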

\begin{proof}
Suppose that $\gamma$ is an essential loop on $\partial M$, which is nullhomotopic in $M$.  Let $G$ denote the group of covering transformations of $M$ and consider the action of $G$ on the boundary.  We first claim that $\gamma$ can be isotoped on the boundary such that for each $g \in G$, either $g(\gamma) \cap \gamma = \emptyset$ or $g(\gamma) = \gamma$.   Of course, we only need to restrict to the subgroup of elements which fix the boundary component containing $\gamma$ setwise.  Further, since $\gamma$ bounds in $M$, it is easy to see that the homology class in $\partial M$ is fixed by all such elements.  Because every finite group action on the torus is equivalent to the quotient of an affine action of the plane, the claim easily follows.  Now, because of this claim, and because the curve $\gamma$ is disjoint from the lift of $P$, the equivariant Dehn's lemma \cite{MeeksYau} implies that there exists a disk $D$ in $M$ bounding $\gamma$ such that for all $g$, either $g(D) \cap D = \emptyset$ or $g(D) = D$, and furthermore, $D$ is transverse to the lift of the branch set.  Consider the (possibly disconnected) surface $\Sigma = \bigcup_{g \in G} g(D)$.  Then, $\Sigma/G$ is a collection of disks in $D^2 \times S^1$ and $\Sigma \to \Sigma/G$ is a branched cover (although some components of $\Sigma$ may have trivial branch locus).  Furthermore, each component of the boundary of $\Sigma/G$ is an essential curve on the boundary of the solid torus.   For homology reasons, it is necessarily a meridional curve on the solid torus and each component of $\Sigma/G$ is a meridional disk.  (The components cannot have any other topology, since a disk can only cover/branch cover another disk.)  Now, if any component of $\Sigma/G$ does not intersect $P$, then we can cut $D^2 \times S^1$ along one of these disks, and see that $P$ is contained in $B^3$ and we have a contradiction.  If some component of $\Sigma/G$ does intersect $P$, it must intersect in exactly one point, since a disk cannot be such a cyclic branched cover over a disk with more than one branch point. (Here we are using that the branch points all correspond to intersections of $P$ with the disk.)    In other words, $P$ is the pattern for a connect-sum, and again we have a contradiction.  This proves the claim and completes the proof of the lemma.              
\end{proof}



\bibliographystyle{abbrv}
\bibliography{SU2toroidalreferences}

\end{document}